\DeclareMathOperator*{\argmin}{argmin}
\DeclareMathOperator*{\diag}{diag}
\newcommand{\prox}{\ensuremath{\operatorname{Prox}}}
\newcommand{\Id}{\ensuremath{\operatorname{Id}}}
\newcommand{\ds}{\displaystyle}
\newcommand{\nexto}{\kern -0.54em}
\newcommand{\dR}{\mathbb{R}}
\newcommand{\dZ}{{\cal Z \kern -0.7em Z}}
\newcommand{\dC}{{\rm\hbox{C \kern-0.8em\raise0.2ex\hbox{\vrule
height5.4pt width0.7pt}}}}
\newcommand{\dQ}{{\rm\hbox{Q \kern-0.85em\raise0.25ex\hbox{\vrule
height5.4pt width0.7pt}}}}
\newcommand{\proofbox}{\hspace{\fill}{$\Box$}}
\newtheorem{theorem}{Theorem}
\newtheorem{conjecture}{Conjecture}
\newtheorem{remark}{Remark}
\theoremstyle{definition}
\newtheorem{algorithm}{Algorithm}
\begin{document}

\title{\vspace{-10mm}\bf Douglas--Rachford Algorithm for Control- and State-constrained Optimal Control Problems}

\author{
Regina S. Burachik\thanks{Mathematics, UniSA STEM, University of South Australia, Mawson Lakes, S.A. 5095, Australia. Emails:~regina.burachik@unisa.edu.au, bethany.caldwell@mymail.unisa.edu.au, yalcin.kaya@unisa.edu.au.}
\qquad
Bethany I. Caldwell\footnotemark[1]
\qquad
C. Yal{\c c}{\i}n Kaya\footnotemark[1]
}

\maketitle

\begin{abstract} {\noindent\sf  
We consider the application of the Douglas--Rachford (DR) algorithm to solve linear--quadratic (LQ) control problems with box constraints on the state and control variables.  We split the constraints of the optimal control problem into two sets: one involving the ODE with boundary conditions, which is affine, and the other a box.  We rewrite the LQ control problems as the minimization of the sum of two convex functions.  We find the proximal mappings of these functions which we then employ for the projections in the DR iterations.  We propose a numerical algorithm for computing the projection onto the affine set.  We present a conjecture for finding the costates and the state constraint multipliers of the optimal control problem, which can in turn be used in verifying the optimality conditions.  We carry out numerical experiments with two constrained optimal control problems to illustrate the working and the efficiency of the DR algorithm compared to the traditional approach of direct discretization.}
\end{abstract}
\begin{verse}
{\em Key words}\/: {\sf Optimal control, Douglas--Rachford algorithm, Harmonic oscillator, Control constraints, State constraints, Numerical methods.}
\end{verse}

\noindent{\bf Mathematical Subject Classification: 49M37; 49N10; 65K10; 65L10; 34H05}

%\centerline{\bf Submitted to }
\pagestyle{myheadings}
%\markboth{}{Submitted to {\sf }, }
\markboth{}{\sf\scriptsize DR Algorithm for Constrained Optimal Control Problems\ \ by R.~S.~Burachik, B.~I.~Caldwell, and C.~Y.~Kaya}

\section{Introduction}\label{sec:intro}
Many of the problems we encounter in the world that can be presented as optimal control problems contain constraints on both the state and control variables. Imagine a scheduling problem where the aim is to maximize profits such as in \cite{MauKimVos2005} in which the control variable is the production rate and the state variable is the inventory level. Naively, one may want to greatly increase both the inventory level and production rate to maximize profits but in practice there is a limit to the amount of inventory a factory can hold and a limit on how quickly humans or machines can work. In order to accurately model this problem, and many others from a wide rage of application areas such as manufacturing, engineering, science, economics, etc., we must use an optimal control problem with both state and control constraints. Although these problems are commonly found in applications they are much more difficult to solve than optimal control problems which purely have control constraints.

We focus our attention on linear--quadratic (LQ) state- and control-constrained optimal control problems. These are infinite dimensional optimization problems that involve the minimization of a quadratic objective function subject to linear differential equation (DE) constraints, affine constraints on the state variables, and affine constraints on the control variables. There is extensive literature studying LQ control problems as they model many problems from a wide variety of disciplines -- see \cite{BurKayMaj2014,MTM,AmmKen1998,MauObe2003,Mou2011,BusMau2000,KugPes1990}. Though many of the LQ control problems posed in the literature contain control constraints along with the linear DE constraints, it is rarer to see state constraints included since, as mentioned above, they are much more difficult to deal with. This paper proposes a unique approach for solving state-constrained problems by applying the Douglas--Rachford (DR) algorithm.

The DR algorithm is a projection algorithm used to minimize the sum of two convex functions. In order to apply the algorithm one needs the proximal operators of the two convex functions. Splitting and projection methods such as the DR algorithm are a popular area of research in optimization with a variety of applications -- see \cite{GraEls2008,AraBorTam2014,BauKoch,AraCampEls2020,Bau2008} for their use in sphere packing problems, protein reconstruction, etc. The use of these methods to solve discrete-time optimal control problems is not new but there are very few applications of these methods to continuous-time optimal control problems. The paper \cite{BausBuraKaya2019} is one in which projection methods were used to solve the energy-minimizing double integrator problem with control constraints. The papers \cite{BurCalKay2022,BurCalKay2024, BurCalKayMou2023} address more general energy-minimizing LQ control problems with control constraints. While a collection of general projection methods is used in \cite{BurCalKay2022}, the DR algorithm is used in~\cite{BurCalKay2024, BurCalKayMou2023}. 

% \sout{Then the authors of the current paper used}  and  \sout{to solve more general energy-minimizing LQ {\color{cyan} control} problems with control constraints}. 

Following the promising numerical results observed in \cite{BurCalKay2024, BurCalKayMou2023}, the present paper will use the DR algorithm for addressing the more challenging control- and state-constrained LQ control problems.

The current technique for solving these control- and state-constrained LQ problems is a direct discretization approach where the infinite-dimensional problem is reduced to a large scale finite-dimensional problem by use of a discretization scheme, e.g. a Runge--Kutta method \cite{Kaya2010,BanKay2013}. This discretized problem is then solved through the use of an optimization modelling language such as AMPL~\cite{AMPL} paired with a (finite-dimensional) large-scale optimization software such as Ipopt~\cite{WacBie2006}. 

In the present paper (as was done in \cite{BausBuraKaya2019, BurCalKay2022} for control constraints only) we make the following contributions for the numerical solution of LQ control problems with state and control constraints.
\begin{itemize}
\item[(i)] We rewrite the original LQ control problem as the minimization of the sum of two convex functions. 
\item[(ii)] We find the proximal mappings of these functions by solving the resulting calculus of variations subproblems (Theorems~\ref{thm:proxf}--\ref{thm:proxg}).
\item[(iii)] We define and apply the DR algorithm to the constrained LQ control problem (Algorithm~\ref{alg:DR}).
\item[(iv)] We propose an algorithm for computing the projection onto the affine constraint set defined by the ODEs (Algorithm~\ref{alg:projA}).
\item[(v)] We carry out numerical experiments to illustrate the efficiency of the DR algorithm over the traditional approach of direct discretization.
\end{itemize}
In addition to these, we also make the following contribution, which is new even for the purely control-constrained LQ control problems.
\begin{itemize}
\item[(vi)] We present a conjecture for finding the costates.  Moreover, we present a technique for finding the multiplier for the state constraints in the case only one state constraint is active at a given time.  We effectively employ the costates and the state control multiplier for a numerical verification of the optimality conditions.
\end{itemize}

The two convex functions mentioned above are defined as follows: the information from the ODE constraints appears in one function while the information from the control and state constraints along with the objective functional appear in the other. We derive the proximal mappings of the two convex functions without reducing the original infinite-dimensional optimization problem to a finite-dimensional one, though we need to discretize the state and control variables over a partition of time when implementing the DR algorithm since a digital computer cannot iterate with functions.

The paper is structured as follows. In Section \ref{sec:oc} we give the problem formulation and optimality conditions for the optimal control problem. In Section \ref{sec:prox} we derive the proximal mappings used in the implementation of the DR algorithm. Section \ref{sec:DR} introduces the DR algorithm. Then Section \ref{sec:experiments} begins with an algorithm for computing one of the proximal mappings (namely the projection onto an affine set) and a conjecture used to obtain the costate variable of the original LQ control problem. Next, this section introduces two example problems, a harmonic oscillator and a mass--spring system. At the end of this section, numerical experiments for the DR algorithm and AMPL--Ipopt suite and their comparisons are given for these two problems.  Finally, concluding remarks and comments for future work are provided in Section \ref{sec:concl}.

\section{Optimal Control Problem}
\label{sec:oc}

In this section we formulate the general optimal control problem that will be the focus of this paper. We give some necessary definitions and provide conditions for optimality from optimal control theory.

Before introducing the optimal control problem we will give some standard definitions. Unless otherwise stated all vectors are column vectors. Let ${\cal L}^2([t_0,t_f];\mathbb{R}^q)$ be the Banach space of Lebesgue measurable functions $z:[t_0,t_f]\rightarrow\mathbb{R}^q$, with finite ${\cal L}^2$ norm, namely,
\[{\cal L}^2([t_0,t_f];\mathbb{R}^q):=\left\{z:[t_0,t_f]\rightarrow\mathbb{R}^q\,\:|\,\:\|z\|_{{\cal L}^2} := \left(\int_{t_0}^{t_f}\|z(t)\|^2\,dt\right)^{1/2}<\infty\right\}\]
where $\|\cdot\|$ is the $\ell_2$ norm in $\dR^q$. Furthermore, ${\cal W}^{1,2}([t_0,t_f];\mathbb{R}^q)$ is the Sobolev space of absolutely continuous functions, namely
\[{\cal W}^{1,2}([t_0,t_f];\mathbb{R}^q):=\left\{z\in {\cal L}^2([t_0,t_f];\mathbb{R}^q)\,|\,\dot{z}:=dz/dt\in {\cal L}^2([t_0,t_f];\mathbb{R}^q)\right\},\]
endowed with the norm
\[\|z\|_{{\cal W}^{1,2}}:=(\|z\|_{{\cal L}^2}^2+\|\dot{z}\|_{{\cal L}^2}^2)^{1/2}.\]
With these definitions we can state now the general LQ control problem as follows.
\begin{align*}
\mbox{(P) }\left\{\begin{array}{rl}
\ds\min & \ \ \ds\frac{1}{2}\int_{t_0}^{t_f} \left(x(t)^TQ(t)x(t) + u(t)^TR(t)u(t)\right)\,dt  \\[5mm] 
\mbox{subject to} & \ \ \dot{x}(t) = A(t)x(t) + B(t)u(t)\,,\ \ x(t_0) = x_0\,,\ \ x(t_f) = x_f\,, \\[2mm]
& \ \ \underline{u}\leq u(t)\leq \overline{u}\,, \ \ \underline{x}\leq x(t)\leq \overline{x}\,.
\end{array} \right.
\end{align*}
The {\em state variable} $x\in {\cal W}^{1,2}([t_0,t_f];\mathbb{R}^n)$, with $x(t) := (x_1(t),\ldots,x_n(t))\in\dR^n$, and the {\em control variable} $u\in {\cal L}^2([t_0,t_f];\mathbb{R}^m)$, with $u(t) := (u_1(t),\ldots,u_m(t))\in\dR^m$. %and $U$ is a fixed closed subset of $\dR^m$ 
The time varying matrices $A:[t_0,t_f]\rightarrow\mathbb{R}^{n\times n}$, $B:[t_0,t_f]\to\mathbb{R}^{n\times m}$, $Q:[t_0,t_f]\rightarrow\mathbb{R}^{n\times n}$ and $R:[t_0,t_f]\rightarrow\mathbb{R}^{m\times m}$ are continuous. For every $t\in [t_0,t_f]$, the matrices $Q(t)$ and $R(t)$ are symmetric, and respectively positive semi-definite and positive definite. For clarity of arguments, these matrices are assumed to be diagonal, namely $Q(t) := \diag(q_1(t),\ldots,q_n(t))$ and $R(t) := \diag(r_1(t),\ldots,r_m(t))$. These diagonality assumptions particularly simplify proximal mapping expressions that appear later. The initial and terminal states are given as $x_0$ and $x_f$ respectively.

\subsection{Optimality conditions}
In this section we state the Maximum Principle by using the {\em direct adjoining approach} from \cite{HarSetVic1995}. We start by defining the extended {\em Hamiltonian function} $H:\dR^n \times \dR^m \times \dR^n \times \dR^n \times \dR^n \times [t_0,t_f]\to \dR$ for Problem~(P) as
\begin{multline*}
H(x(t),u(t),\lambda(t),\mu^1(t),\mu^2(t),t) := \frac{1}{2}(x(t)^TQ(t)x(t)+u(t)^TR(t)u(t) )+\lambda(t)^T(A(t)x(t)\\+B(t)u(t))+\mu^1(t)^T(x(t)-\overline{x}(t))+\mu^2(t)^T(\underline{x}(t)-x(t)),
\end{multline*}
where the {\em adjoint variable} vector $\lambda:[t_0,t_f]\rightarrow\mathbb{R}^n$ with $\lambda(t):=(\lambda_1(t),\dots,\lambda_n(t))\in\mathbb{R}^n$ and the {\em state constraint multiplier vectors} $\mu^1,\mu^2:[t_0,t_f]\rightarrow\mathbb{R}^n$ with $\mu^i(t):=(\mu_1^i(t),\dots,\mu_n^i(t))\in\mathbb{R}^n$, $i=1,2$. For brevity, we use the following notation,
\[
H[t] := H(x(t),u(t),\lambda(t),\mu^1(t),\mu^2(t),t)\,.
\]
The adjoint variable vector is assumed to satisfy
\begin{equation}\label{eqn:adj}
    \dot{\lambda}(t) := -H_x[t] = -Q(t)x(t)-A(t)^T\lambda(t)-\mu^1(t)+\mu^2(t)
\end{equation}
for a.e. $t\in[t_0,t_f]$, where $H_x:=\partial H/\partial x$. 

\noindent
{\bf Maximum Principle.}\ \ Suppose the pair 
\[
(x,u)\in{\cal W}^{1,2}([t_0,t_f];\mathbb{R}^n)\times{\cal L}^2([t_0,t_f];\mathbb{R}^m)
\]
is optimal for Problem (P). Then there exists a piecewise continuous adjoint variable vector $\lambda\in{\cal W}^{1,2}([t_0,t_f];\mathbb{R}^n)$ as defined in \eqref{eqn:adj} and piecewise continuous multipliers $\mu^1,\mu^2\in{\cal L}^2([t_0,t_f];\mathbb{R}^n)$, such that $(\lambda(t),\mu^1(t),\mu^2(t))\neq\mathbf{0}$ for all $t\in[t_0,t_f]$, and that for a.e. $t\in[t_0,t_f]$,
\begin{align}
    u_i(t) &= \argmin_{\underline{u}_i\leq v_i\leq \overline{u}_i} H(x(t),u_1(t),\ldots,v_i,\dots,u_m(t),\lambda(t),\mu^1(t),\mu^2(t),t) \nonumber \\
    &= \argmin_{\underline{u}_i\leq v_i\leq \overline{u}_i} \frac{1}{2} r_i(t)v_i^2 + \lambda(t)^Tb_i(t)v_i\,,
    \label{eqn:maxPrin}
\end{align}
for $i = 1,\ldots,m$, where $b_i(t)$ is the $i$th column of the matrix $B(t)$ and $r_i(t)$ is the $i$th diagonal element of $R(t)$. 
Moreover, the multipliers $\mu^1(t),\mu^2(t)$ must satisfy the {\em complementarity conditions}
\begin{align}
 \mu_i^1(t)\geq 0, \qquad \mu_i^1(t)(x_i(t)-\overline{x}_i(t))=0 \label{eqn:mu1}\\[2mm]
 \mu_i^2(t)\geq 0, \qquad \mu_i^2(t)(\underline{x}_i(t)-x_i(t))=0 \label{eqn:mu2}
\end{align}
for all $i=1,\dots,n$.

Suppose $\underline{u}_i=-\infty$, $\overline{u}_i=\infty$, $i = 1,\ldots,m$, i.e., the control vector is unconstrained. Then \eqref{eqn:maxPrin} becomes
\[H_{u_i}[t] = 0,\]
i.e.,
\begin{equation}  \label{ui_unconstr}
r_i(t)u_i(t)+b_i(t)^T\lambda(t) = 0\,,
\end{equation}
$i = 1,\ldots,m$. Then \eqref{ui_unconstr} can be solved for $u_i(t)$ as
\begin{equation}  
u_i(t) = -\frac{1}{r_i(t)}b_i(t)^T\lambda(t)\,,
\end{equation}
for $i = 1,\ldots,m$; or using the matrices $B(t),R(t)$,
\begin{equation} \label{eqn:u(t)}
u(t) = -[R(t)]^{-1}B(t)^T\lambda(t)\,.
\end{equation}
When we consider the constraints on $u$, one gets from \eqref{eqn:maxPrin},
\begin{eqnarray}\label{eqn:u_gen}
u_j(t) = \left\{\begin{array}{rl}
   \underline{u}_j, & \mbox{if } \frac{1}{r_j(t)}b_j(t)^T\lambda(t) \geq \underline{u}_j, \\[2mm]
  -\frac{1}{r_j(t)}b_j(t)^T\lambda(t), & \mbox{if } \underline{u}_j\leq -\frac{1}{r_j(t)}b_j(t)^T\lambda(t) \leq \overline{u}_j, \\[2mm]
  \overline{u}_j, & \mbox{if } \frac{1}{r_j(t)}b_j(t)^T\lambda(t) \leq \overline{u}_j,
\end{array} \right.
\end{eqnarray}
for a.e. $t\in[t_0,t_f]$, $j=1,\dots,m$. \\ \\
% \begin{align}
%  % \mu_i^1(t)\geq 0, \qquad \mu_i^1(t)(x_i(t)-\overline{x}_i(t))=0 \label{eqn:mu1}\\[2mm]
%  % \mu_i^2(t)\geq 0, \qquad \mu_i^2(t)(\underline{x}_i(t)-x_i(t))=0 \label{eqn:mu2}
% \end{align}

\section{Splitting and Proximal Mappings}
\label{sec:prox}

In this section, we rewrite Problem (P) as the minimization of the sum of two convex functions $f$ and $g$, and give the proximal mappings for these functions in Theorems \ref{thm:proxf}--\ref{thm:proxg}.

We split the constraints from (P) into two sets $\cal{A},\cal{B}$ given as
\begin{eqnarray} 
{\cal A} := &&\big\{(x,u)\in {\cal L}^{2}([t_0,t_f];\dR^n)\times {\cal L}^2([t_0,t_f];\dR^m)\ | \mbox{ which solves } \nonumber \\
&&\dot{x}(t) = A(t)x(t)+B(t)u(t), \ \ x(t_0) = x_0, \ \ x(t_f) = x_f, \mbox{ for all } t\in[t_0,t_f]\big\}\,, \label{A_gen} \\[2mm]
{\cal B} := &&\big\{(x,u)\in {\cal L}^{2}([t_0,t_f];\dR^n)\times {\cal L}^2([t_0,t_f];\dR^m)\ |\ \underline{x}_i\le x_i(t)\le \overline{x}_i\,,\ \underline{u}_j\le u_j(t)\le \overline{u}_j\,, \nonumber\\[1mm]
&&\mbox{for all } t\in[t_0,t_f],\ i=1,\dots,n,\ 
j=1,\dots,m\big\}\,.\label{B_gen}
\end{eqnarray}
Despite previously defining $x\in{\cal W}^{1,2}([t_0,t_f];\mathbb{R}^n)$ in our sets ${\cal A}, {\cal B}$ we let $x\in{\cal L}^2([t_0,t_f];\mathbb{R}^n)$ to simplify the calculation of the proximal mappings. By definition of ${\cal W}^{1,2}([t_0,t_f];\mathbb{R}^n)$ if $x\in{\cal A}$ then $x\in{\cal W}^{1,2}([t_0,t_f];\mathbb{R}^n)$.

We assume that the control system $\dot{x}(t) = A(t)x(t) + B(t)u(t)$ is {\em controllable}; in other words the control system can be driven from any $x_0$ to any other $x_f$---for a precise definition of controllability and the tests for controllability, see \cite{Rugh1996}. Then there exists a (possibly not unique) $u(\cdot)$ such that, when this $u(\cdot)$ is substituted, the boundary-value problem given in the expression for ${\cal A}$ has a solution $x(\cdot)$.  In other words, ${\cal A} \neq \emptyset$.  Also, clearly, ${\cal B} \neq \emptyset$.  We note that the constraint set~${\cal A}$ is an {\em affine subspace}. Given that ${\cal B}$ is a {\em box}, the constraints turn out to be two convex sets in Hilbert space. Since every sequence converging in ${\cal L}^2$ has a subsequence converging pointwise, it is straightforward to see that the set ${\cal B}$ is closed in ${\cal L}^2$. The closedness of ${\cal A}$ will be established later on as a consequence of Theorem \ref{thm:proxg} (see Remark~\ref{rem:A closed}).

Fix $\beta>0$ and let
\begin{equation}  \label{def:fg}
f(x,u) := \iota_{\cal{B}}(x,u) + \frac{\beta}{2}(x(t)^TQ(t)x(t) + u(t)^TR(t)u(t))\quad\mbox{and}\quad g(x,u) := \iota_{\cal{A}}(x,u)\,,
\end{equation}
where $\iota_{\cal{C}}$ is the indicator function of the set $\cal{C}$, namely
\[
\iota_{\cal C}(x,u) := \left\{\begin{array}{ll}
    0\,, & \mbox{ if\ \ } (x,u)\in{\cal C}\,, \\
    \infty\,, & \mbox{ otherwise}.
    \end{array}\right.
\]

Problem (P) is then equivalent to the following problem.
\begin{equation}  \label{minfg}  
\min \ \ f(x,u) + g(x,u).
\end{equation}

In our setting, we assume that we are able to compute the projector operators $P_{\cal{A}}$ and $P_{\cal{B}}$.  These operators project a given point onto each of the constraint sets $\cal{A}$ and $\cal{B}$, respectively. Recall that the {\em proximal mapping} of a functional $h$ is defined by \cite[Definition~24.1]{BauCom2017}.  For our setting,
\begin{equation}  \label{def:prox}
\prox_h(x,u) := \argmin_{y\in {\cal L}^2([t_0,t_f];\dR^n) \atop v\in {\cal L}^2([t_0,t_f];\dR^m)}
\left(h(y,v) + \frac{1}{2}\|y - x\|_{{\cal L}^2}^2 + \frac{1}{2}\|v - u\|_{{\cal L}^2}^2  \right),
\end{equation}
for any $(x,u)\in {\cal L}^2([t_0,t_f];\dR^n)\times {\cal L}^2([t_0,t_f];\dR^m)$. Recall that the {\em projection $P_{\cal C}(x,u)$ of a point $(x,u)$ onto ${\cal C}$} is characterized by $P_{\cal C}(x,u)\in {\cal C}$ and, $\forall (y,v)\in {\cal C}$, $\langle (y,v)-P_{\cal C}(x,u)|(x,u)-P_{\cal C}(x,u)\rangle\leq0$ \cite[Theorem~3.16]{BauCom2017}.

Note that $\prox_{\iota_{\cal C}}=P_{\cal C}$.

In order to implement the Douglas--Rachford algorithm we must write the proximal mappings $f$ and $g$. The proofs of Theorems~\ref{thm:proxf} and~\ref{thm:proxg} below follow the broad lines of proof of Lemma~2 in~\cite{BurCalKayMou2023}.  In both theorems, the major difference from~\cite{BurCalKayMou2023} is that the proximal operators in the current paper have two variables $x^-$ and $u^-$.  Thanks to separability, the proof of Theorem~\ref{thm:proxf} is a straightforward modification of the corresponding part of the proof of~\cite[Lemma~2]{BurCalKayMou2023}. We include a full proof of Theorem~\ref{thm:proxf} for the convenience of the reader.  On the other hand, the proof of Theorem~\ref{thm:proxg} deals with the solution of a more involved optimal control subproblem, namely Problem~(Pg).

\begin{theorem}\label{thm:proxf}
The proximal mapping of $f$
%$$f(x,u) = \iota_{\cal{B}}(x,u) + \frac{\beta}{2}\int_{t_0}^{t_f}(x(t)^TQ(t)x(t) + u(t)^TR(t)u(t))dt$$ 
is given as $\prox_f(x^-,u^-) = (y,v)$ such that the components of $y$ and $v$ are expressed as
\begin{eqnarray}\label{eqn:ProjB_x}
y_i(t) = \left\{\begin{array}{rl}
   \overline{x}_i, & \mbox{if } \frac{1}{\beta q_i + 1}x^-_i(t) \geq \overline{x}_i, \\[2mm]
  \frac{1}{\beta q_i + 1}x^-_i(t), & \mbox{if } \underline{x}_i\leq \frac{1}{\beta q_i + 1}x^-_i(t) \leq \overline{x}_i, \\[2mm]
  \underline{x}_i, & \mbox{if } \frac{1}{\beta q_i + 1}x^-_i(t) \leq \underline{x}_i,
\end{array} \right.
\end{eqnarray}
\begin{eqnarray}\label{eqn:ProjB_u}
v_j(t) = \left\{\begin{array}{rl}
   \overline{u}_j, & \mbox{if } \frac{1}{\beta r_j + 1}u^-_j(t) \geq \overline{u}_j, \\[2mm]
  \frac{1}{\beta r_j + 1}u^-_j(t), & \mbox{if } \underline{u}_j\leq \frac{1}{\beta r_j + 1}u^-_j(t) \leq \overline{u}_j, \\[2mm]
  \underline{u}_j, & \mbox{if } \frac{1}{\beta r_j + 1}u^-_j(t) \leq \underline{u}_j,
\end{array} \right.
\end{eqnarray}
for all $t\in[t_0,t_f],\ i=1,\dots,n, \ j=1,\dots,m$.
\end{theorem}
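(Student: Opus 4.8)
The plan is to compute the proximal mapping directly from its definition \eqref{def:prox}, exploiting the fact that every ingredient of $f$ in \eqref{def:fg} is separable. Substituting $f$ into \eqref{def:prox} turns the task into the minimization over $(y,v)\in\mathcal{L}^2([t_0,t_f];\dR^n)\times\mathcal{L}^2([t_0,t_f];\dR^m)$ of
\[
\iota_{\mathcal{B}}(y,v) + \int_{t_0}^{t_f}\left[\frac{\beta}{2}\bigl(y(t)^TQ(t)y(t)+v(t)^TR(t)v(t)\bigr) + \frac{1}{2}\|y(t)-x^-(t)\|^2 + \frac{1}{2}\|v(t)-u^-(t)\|^2\right]dt.
\]
Because $Q$ and $R$ are diagonal and the set $\mathcal{B}$ in \eqref{B_gen} is a product of pointwise intervals, both the integrand and the feasible set decouple completely across the components $i=1,\dots,n$, $j=1,\dots,m$ and across time $t$. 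First I would therefore reduce the problem to a family of scalar minimizations, one for each component and each $t$.

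Second, for fixed $t$ and fixed $i$ I would solve
\[
\min_{\underline{x}_i\le y_i\le\overline{x}_i}\ \frac{\beta}{2}q_i(t)\,y_i^2 + \frac{1}{2}(y_i - x^-_i(t))^2,
\]
together with the analogous scalar problem in $v_j$. Completing the square rewrites the objective as $\tfrac{1}{2}(\beta q_i(t)+1)\bigl(y_i - \tfrac{1}{\beta q_i(t)+1}x^-_i(t)\bigr)^2$ plus a constant, so it is a strictly convex quadratic whose unconstrained minimizer is $\tfrac{1}{\beta q_i(t)+1}x^-_i(t)$. The constrained minimizer over $[\underline{x}_i,\overline{x}_i]$ is then the Euclidean projection of this point onto the interval, which is exactly the three-branch formula \eqref{eqn:ProjB_x}; the control components produce \eqref{eqn:ProjB_u} in the same way, with $q_i,r_j$ replaced appropriately.

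The final and only non-routine step is to justify the interchange of minimization and integration, i.e., that assembling the pointwise minimizers yields the global minimizer of the integral functional over $\mathcal{B}$. I would verify that the candidate $(y,v)$ defined by \eqref{eqn:ProjB_x}--\eqref{eqn:ProjB_u} is measurable, being the composition of the measurable data $x^-,u^-$ with the continuous clamping map, and that it lies in $\mathcal{L}^2([t_0,t_f];\dR^n)\times\mathcal{L}^2([t_0,t_f];\dR^m)$, since on the finite horizon $[t_0,t_f]$ each component is either a bounded constant or the $\mathcal{L}^2$ function $\tfrac{1}{\beta q_i+1}x^-_i$. It is therefore feasible for \eqref{minfg}. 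Since the integrand is nonnegative and this $(y,v)$ minimizes it pointwise a.e., every competing feasible function has a pointwise-larger integrand and hence a larger integral, so $(y,v)$ is optimal; strict convexity of each scalar problem gives uniqueness, yielding $\prox_f(x^-,u^-)=(y,v)$. This mirrors the corresponding part of \cite[Lemma~2]{BurCalKayMou2023}, the added feature here being only the simultaneous handling of the two variables, which separability makes transparent. I expect this measurable-selection/interchange justification to be the main obstacle, while the scalar optimization and projection are routine.
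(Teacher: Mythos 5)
Your proposal is correct and follows essentially the same route as the paper: reduce the prox computation to a box-constrained minimization of the separable integral functional, decouple it componentwise, and solve each scalar quadratic by projecting the unconstrained minimizer $\tfrac{1}{\beta q_i+1}x^-_i(t)$ onto the interval. The only difference is that you explicitly justify the measurability and the pointwise-minimization/integration interchange, a step the paper subsumes under ``straightforward manipulations.''
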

\begin{proof}
From 
%\cite[Eqn. (24.1)]{BauCom2017} 
\eqref{def:prox} and the definition of $f$ in~\eqref{def:fg} we have that
\begin{align*}
\prox_f(x^-,u^-) = &\argmin_{x,u} \iota_{\cal{B}}(x,u) + \frac{1}{2}\int_{t_0}^{t_f} \Bigl(\beta x(t)^TQ(t)x(t)+\|x(t)-x^-(t)\|^2 \\[2mm] &+ \beta u(t)^TR(t)u(t) + \|u(t)-u^-(t)\|^2\Bigr)\,dt.
\end{align*}
In other words, to find $\prox_f(x^-,u^-)$ we need to find $(y,v)$ that solves
\begin{align*}
\mbox{(Pf) }\left\{\begin{array}{rl}
\ds\min & \ \ \ds\frac{1}{2}\int_{t_0}^{t_f} \Bigl(\beta y(t)^TQ(t)y(t)+\|y(t)-x^-(t)\|^2 \\[4mm] & \hspace{10mm} + \beta v(t)^TR(t)v(t) + \|v(t)-u^-(t)\|^2\Bigr)\,dt  \\[5mm] 
\mbox{subject to} & \ \ \underline{x}_i\le y_i(t)\le \overline{x}_i\,,\ \underline{u}_j\le v_j(t)\le \overline{u}_j \\[2mm]& \ \ \mbox{for all $t\in[t_0,t_f],\ i=1,\dots,n, \ j=1,\dots,m$.}
\end{array} \right.
\end{align*}
Problem~(Pf) is separable in the variables $y$ and $v$ so we can consider the problems of minimizing w.r.t. $y$ and $v$ individually and thus solve the two subproblems
\begin{align*}
\mbox{(Pf1) }\left\{\begin{array}{rl}
\ds\min & \ \ \ds\frac{1}{2}\int_{t_0}^{t_f} \beta y(t)^TQ(t)y(t)+\|y(t)-x^-(t)\|^2\,dt  \\[5mm] 
\mbox{subject to} & \ \ \underline{x}_i\le y_i(t)\le \overline{x}_i\,, \ \ \mbox{for all $t\in[t_0,t_f],\ i=1,\dots,n$,}
\end{array} \right.
\end{align*}
and
\begin{align*}
\mbox{(Pf2) }\left\{\begin{array}{rl}
\ds\min & \ \ \ds\frac{1}{2}\int_{t_0}^{t_f} \beta v(t)^TR(t)v(t) + \|v(t)-u^-(t)\|^2\,dt  \\[5mm] 
\mbox{subject to} & \ \ \underline{u}_j\le v_j(t)\le \overline{u}_j \ \ \mbox{for all $t\in[t_0,t_f],\ j=1,\dots,m$.}
\end{array} \right.
\end{align*}
The solution to Problem~(Pf1) is given by
\[
y_i(t) = \argmin_{\underline{x}_i \le z_i \le \overline{x}_i} \big(\, \beta\,q_i\,z_i^2 + (z_i - x_i^-(t))^2\,\big)\,,
\]
$i = 1,\ldots,n$, which, after straightforward manipulations, yields~\eqref{eqn:ProjB_x}.  The solution to Problem~(Pf2) is obtained similarly as
\[
v_j(t) = \argmin_{\underline{u}_j \le w_j \le \overline{u}_j} \big(\, \beta\,r_j\,w_j^2 + (w_j - u_j^-(t))^2\,\big)\,,
\]
$j = 1,\ldots,m$, which yields~\eqref{eqn:ProjB_u} after straightforward manipulations.
\end{proof}

\begin{theorem}\label{thm:proxg}
The proximal mapping of $g$ %$g(x,u) = \iota_{\cal{A}}(x,u)$ 
is given as $\prox_g(x^-,u^-) = P_{\cal{A}}(x^-,u^-) = (y,v)$ such that
\begin{align}
    y(t) =&\ x(t), \label{eqn:ProjA_x} \\
    v(t) =&\ u^-(t)-B(t)^T\lambda(t), \label{eqn:ProjA_u}
\end{align}
where $x(t), \lambda(t)$ are obtained by solving the two-point boundary-value problem (TPBVP)
\begin{equation}  \label{TPBVP}
\begin{array}{ll}
&\dot{x}(t) = A(t)x(t) + B(t)u^-(t) - B(t)B(t)^T\lambda(t)\,, \ \ x(t_0) = x_0\,,\ \ x(t_f) = x_f\,, \\[2mm]
&\dot{\lambda}(t) = -x(t) + x^-(t) - A(t)^T\lambda(t)\,.
\end{array}
\end{equation}
\end{theorem}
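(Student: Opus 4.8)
Since $g=\iota_{\cal A}$, we have $\prox_g = P_{\cal A}$, so computing $\prox_g(x^-,u^-)$ is the same as projecting $(x^-,u^-)$ onto the affine set ${\cal A}$. By \eqref{def:prox} this projection is the minimizer of the least-squares objective over ${\cal A}$, i.e.\ the solution of the linear--quadratic optimal control subproblem
\[
\mbox{(Pg)}\quad \min\ \frac12\int_{t_0}^{t_f}\bigl(\|y(t)-x^-(t)\|^2+\|v(t)-u^-(t)\|^2\bigr)\,dt
\]
subject to $\dot y = A y + B v$, $y(t_0)=x_0$, $y(t_f)=x_f$. The plan is to solve (Pg) by the Maximum Principle stated in Section~\ref{sec:oc}, specialized to the case in which there are no inequality constraints (so that $\mu^1=\mu^2=0$ and the control $v$ is unconstrained), and then to confirm that the resulting candidate is indeed the projection.

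First I would form the Hamiltonian of (Pg),
\[
\widetilde H(y,v,\lambda,t)=\tfrac12\|y-x^-\|^2+\tfrac12\|v-u^-\|^2+\lambda^T\!\bigl(A y+B v\bigr),
\]
and write the two first-order conditions. The costate equation $\dot\lambda=-\widetilde H_y=-(y-x^-)-A^T\lambda$ immediately produces the second line of the TPBVP~\eqref{TPBVP}. Since $v$ is unconstrained, the minimum condition \eqref{eqn:maxPrin} reduces to the stationarity $\widetilde H_v=(v-u^-)+B^T\lambda=0$, which yields $v=u^--B^T\lambda$, i.e.\ \eqref{eqn:ProjA_u}; here there is no transversality condition on $\lambda$ because both state endpoints are fixed. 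Substituting this $v$ into the dynamics $\dot y = A y + B v$ gives $\dot y = A y + B u^- - BB^T\lambda$, which is the first line of~\eqref{TPBVP} once we set $y=x$. This recovers all the asserted formulas.

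It then remains to justify that this stationary point is the global minimizer, hence the projection. Because (Pg) has affine dynamics and a jointly convex (indeed strictly convex) quadratic cost in $(y,v)$, the Maximum Principle conditions are not only necessary but also sufficient, so any solution $(x,\lambda)$ of the TPBVP delivers the unique optimal $(y,v)$. The cleanest way I would make this rigorous---and avoid any circular appeal to closedness of ${\cal A}$---is to verify the projection characterization directly: for an arbitrary competitor $(\tilde y,\tilde v)\in{\cal A}$ the difference $(\delta y,\delta v):=(\tilde y-y,\tilde v-v)$ satisfies the homogeneous system $\dot{\delta y}=A\,\delta y+B\,\delta v$ with $\delta y(t_0)=\delta y(t_f)=0$. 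Using $x^--y=\dot\lambda+A^T\lambda$ and $u^--v=B^T\lambda$ from the optimality relations, an integration by parts collapses the inner product $\langle\delta y\,|\,x^--y\rangle+\langle\delta v\,|\,u^--v\rangle$ to the boundary term $[\,\delta y^T\lambda\,]_{t_0}^{t_f}=0$, which confirms the variational inequality characterizing $P_{\cal A}$.

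The step I expect to be the main obstacle is well-posedness, namely establishing that the linear TPBVP~\eqref{TPBVP} actually has a solution (equivalently, that the projection onto ${\cal A}$ exists and is unique). I would handle this through the controllability assumption, which guarantees ${\cal A}\neq\emptyset$, together with a transition-matrix/sweep analysis of the coupled linear two-point boundary-value problem, or alternatively by the direct-methods existence argument for (Pg). Once solvability of the TPBVP is in hand, the integration-by-parts identity above simultaneously proves optimality and yields the closedness of ${\cal A}$ recorded in Remark~\ref{rem:A closed}.
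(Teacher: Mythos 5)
Your proposal follows essentially the same route as the paper's proof: identify $\prox_g$ with $P_{\cal A}$, pose the projection as the optimal control subproblem (Pg), apply the maximum principle (costate equation plus the stationarity condition $H_v=0$, with no transversality condition since both endpoints are fixed), and assemble the resulting TPBVP with $y=x$. The paper stops at these necessary conditions, whereas your integration-by-parts verification of the variational characterization of $P_{\cal A}$ (which is computed correctly: the inner product collapses to $[\,\delta y^T\lambda\,]_{t_0}^{t_f}=0$) and your flagging of TPBVP solvability supply the sufficiency and well-posedness discussion that the paper leaves implicit, deferring instead to convexity, Remark~\ref{rem:A closed}, and the numerical treatment in Algorithm~\ref{alg:projA}.
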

\begin{proof}
Using \cite[Example 12.25]{BauCom2017}, and the definition of $g$ in~\eqref{def:fg},
\[\prox_g(x^-,u^-) = \prox_{\iota_{\cal{A}}}(x^-,u^-) = P_{\cal{A}}(x^-,u^-),\]
which verifies the very first assertion.
From 
%\cite[Eqn. (24.1)]{BauCom2017} 
\eqref{def:prox} and the definition of $g$ in~\eqref{def:fg} we have that
\[\prox_g(x^-,u^-) = \argmin_{x,u} \iota_{\cal{A}}(x,u) + \frac{1}{2}\int_{t_0}^{t_f}(\|x(t)-x^-(t)\|^2 + \|u(t)-u^-(t)\|^2)\,dt.\]
In other words, to find $\prox_g(x^-,u^-)$ we need to find $(y,v)$ that solves the problem
\begin{align*}
\mbox{(Pg) }\left\{\begin{array}{rl}
\ds\min & \ \ \ds\frac{1}{2}\int_{t_0}^{t_f} \|y(t)-x^-(t)\|^2 + \|v(t)-u^-(t)\|^2\,dt  \\[5mm] 
\mbox{subject to} & \ \ \dot{y}(t) = A(t)y(t) + B(t)v(t)\,, \ \ y(t_0) = x_0\,,\ \ y(t_f) = x_f\,.
\end{array} \right.
\end{align*}
Problem~(Pg) is an optimal control problem where $y(t)$ is the state variable and $v(t)$ is the control variable.  The Hamiltonian for Problem (Pg) is
\[
H(y(t),v(t),\lambda(t),t) := \frac{1}{2}(\|y(t)-x^-(t)\|^2 + \|v(t)-u^-(t)\|^2) + \lambda(t)^T(A(t)y(t) + B(t)v(t))
\]
and the associated costate equation is
\begin{equation}  \label{Pg_costate}
\dot{\lambda}(t) = -\partial H / \partial y= -y(t)+x^-(t)-A(t)^T\lambda(t).
\end{equation}
If $v$ is the optimal control for Problem~(Pg) then, by the maximum principle, $H_v[t] = 0$ for all $t\in[t_0,t_f]$.  In other words,
\[
v(t) - u^-(t) + B(t)^T\lambda(t) = 0\,,
\]
for all $t\in[t_0,t_f]$, a re-arrangement of which yields~\eqref{eqn:ProjA_u}. Collecting together the ODE in Problem~(Pg) and the ODE in~\eqref{Pg_costate}, substituting $v(t)$ from~\eqref{eqn:ProjA_u}, and assigning $y(t) = x(t)$, result in the TPBVP in~\eqref{TPBVP}.
\end{proof}

\begin{remark} \rm
We note from Theorem~\ref{thm:proxg} that $\prox_g$ is the projection onto the affine set $\cal A$.  Unlike $\prox_f$, in general, we cannot find an analytical solution to \eqref{TPBVP} to obtain $\prox_g$ (or the projection onto $\cal A$) so for this purpose we will propose Algorithm~\ref{alg:projA} in Section~\ref{sec:algorithm_PA} to get a numerical solution.
\proofbox
\end{remark}

\begin{remark} \label{rem:A closed}\rm
From Theorem~\ref{thm:proxg} we see that every pair $(x,u)\in {\cal L}^2\times {\cal L}^2$ has a projection onto ${\cal A}$. In other words, ${\cal A}$ is a {\em Chebyshev set}. It is well-known that every Chebyshev set is closed (see \cite[Remark 3.11(i)]{BauCom2017}). Hence, the set ${\cal A}$ is closed in the topology of ${\cal L}^2\times {\cal L}^2$.
\proofbox
\end{remark}

\section{Douglas--Rachford Algorithm}
\label{sec:DR}

The application of the Douglas--Rachford (DR) algorithm to our problem is slightly different to that in \cite{BausBuraKaya2019,BurCalKay2024}. Since we are solving control- and state-constrained optimal control problems we must define the proximal mappings at the pair $(x,u)$, rather than just at $u$ as in~\cite{BausBuraKaya2019,BurCalKay2024}. Thus in the implementation of the DR algorithm we give the iterations for the pair of iterates $(x^k,u^k)$ rather than for $u^k$ alone.

Given $\beta>0$, we specialize the DR algorithm (see
\cite{DougRach}, \cite{LM} and \cite{EckBer}) 
to the case of minimizing the sum of the two functions $f$ and $g$ as in~\eqref{def:fg}--\eqref{minfg}. The DR operator associated with the ordered pair $(f,g)$ is defined by
\[ 
T :=  \Id -  \prox_f +  \prox_g(2 \prox_f-\Id)\,. 
\]
Application of the operator to our case is given by
\begin{align}\label{eqn:DR}
T(x,u) &= (x,u)-\prox_f(x,u)+P_{\cal{A}}(2\prox_f(x,u)-(x,u))\,,
\end{align}
where the proximal mappings of $f$ and $g$ are provided as in Theorems \ref{thm:proxf}--\ref{thm:proxg}.
Let $X$ be an arbitrary Hilbert space. Now fix $x_0\in X$. Given $x_n\in X$, $k\geq 0$, the DR iterations are set as follows.
\begin{align*}
(b_{x,k},b_{u,k}) &:= \prox_f(x_k,u_k)\,,\\
(x_{k+1},u_{k+1}) &:= T(x_k,u_k)
= (x_k,u_k)-(b_{x,k},b_{u,k})
+P_{\cal{A}}(2(b_{x,k},b_{u,k})-(x_k,u_k))\,.
\end{align*}
The DR algorithm is implemented as follows. We define a new parameter $\gamma:=1/(1+\beta)$ where $\beta$ is the parameter multiplying the objective as in \eqref{def:fg} and Theorem~\ref{thm:proxf}. The choice of $\gamma \in ]0,1[$ can be made because changing $\beta$ does not affect the solution of Problem~(P).

\noindent
\begin{algorithm}{({\bf Douglas--Rachford})}\label{alg:DR}
\begin{description}
\item[Step 1] ({\em Initialization}) Choose a parameter $\gamma\in\left]0,1\right[$ and the initial iterate $(x^0,u^0)$ arbitrarily. 
Choose a small parameter $\varepsilon>0$, and set $k=0$. 
\item[Step 2] ({\em Projection onto ${\cal B}$})  Set $(x^-,u^-) = \gamma\, (x^k,u^{k})$. 
Compute $(\widetilde{x},\widetilde{u}) = P_{{\cal B}}(x^-,u^-)$ by using \eqref{eqn:ProjB_x}--\eqref{eqn:ProjB_u}.
\item[Step 3] ({\em Projection onto ${\cal A}$}) Set $(x^-,u^-) := 2\,(\widetilde{x},\widetilde{u})-(x^k,u^k)$. 
Compute $(\widehat{x},\widehat{u}) = P_{{\cal A}}(x^-,u^-)$ by using \eqref{eqn:ProjA_x}--\eqref{eqn:ProjA_u} or Algorithm~\ref{alg:projA}.
\item[Step 4] ({\em Update}) Set $(x^{k+1},u^{k+1}) := (x^k,u^k) + (\widehat{x},\widehat{u}) - (\widetilde{x},\widetilde{u})$.
\item[Step 5] ({\em Stopping criterion}) If $\max(\|x^{k+1} - x^k\|_{L^\infty}, \|u^{k+1} - u^k\|_{L^\infty}) \le \varepsilon$ or $k>=200$, then return $(\widetilde{x},\widetilde{u})$ and stop.  
Otherwise, set $k := k+1$ and go to Step 2.
\end{description}
\end{algorithm}

\begin{remark}  \rm
We point out that, in general, only weak convergence is guaranteed for the DR algorithm (see \cite[Theorem 1]{Svaiter} or \cite[Theorem 4.4]{BauMoursi}).  It should be noted that the final iterate of the state--control pair in the box $\cal B$ is returned by the algorithm.
\proofbox
\end{remark}

\subsection{Algorithm for projector onto $\cal A$}
\label{sec:algorithm_PA}

We introduce a procedure for numerically projecting onto $\cal A$ that is an extension of Algorithm 2 from \cite{BurCalKay2024} to the case of LQ control problems with state and control constraints.  The procedure below~(Algorithm~\ref{alg:projA}) can be employed in Step~3 of the DR algorithm.  In the procedure we effectively solve the TPBVP in~\eqref{TPBVP} by implementing the standard shooting method~\cite{Ascher95, Keller68, Stoer93}.
Throughout the steps of Algorithm~\ref{alg:projA} below, we will solve the ODEs in \eqref{TPBVP}, rewritten here in matrix form as
\begin{align}\label{eqn:lin_sys}
\begin{bmatrix} \dot{x}(t) \\ \dot{\lambda}^{DR}(t) \end{bmatrix} &= \begin{bmatrix} A(t) & -B(t)B(t)^T \\ -I_{n\times n} & -A(t)^T \end{bmatrix}\begin{bmatrix} x(t) \\ \lambda^{DR}(t) \end{bmatrix} + \begin{bmatrix} 0_{n\times n} & B(t) \\ I_{n\times n} & 0_{n\times m} \end{bmatrix}\begin{bmatrix} x^-(t) \\ u^-(t) \end{bmatrix},
\end{align}
with various initial conditions (IC):
\begin{align}\label{eqn:IC}
\mbox{(i)} \begin{bmatrix}x(t_0) \\ \lambda^{DR}(t_0)\end{bmatrix} = \begin{bmatrix}x_0 %\\ 0_{n\times m} \end{bmatrix}, \ \
\\ 0 \end{bmatrix}, \ \
\mbox{(ii)} \begin{bmatrix}x(t_0) \\ \lambda^{DR}(t_0)\end{bmatrix} = \begin{bmatrix}x_0 \\ e_i \end{bmatrix}, \ \
\mbox{(iii)}\begin{bmatrix}x(t_0) \\ \lambda^{DR}(t_0)\end{bmatrix} = \begin{bmatrix}x_0 \\ \lambda^{DR}_0 \end{bmatrix}.
\end{align}
In the above equations, we are using $\lambda^{DR}$, instead of just $\lambda$, to emphasize the fact that $\lambda^{DR}$ is the costate variable emanating from solving Problem~(Pg) to compute the projection onto $\cal A$ within the DR algorithm.  We reiterate that Problem~(Pg) is more involved than its counterpart in~\cite{BurCalKayMou2023}, which leads to the ODE in~\eqref{eqn:lin_sys} which in turn is more complicated than its counterpart in~\cite{BurCalKay2024}.

\vspace{4mm}

\begin{algorithm}{({\bf Numerical Computation of the Projector onto ${\cal A}$})} \label{alg:projA}\
\begin{description}
\item[Step 0] ({\em Initialization}) The following are given: Current iterate $u^-$, the system and control matrices $A(t)$ and $B(t)$, the numbers of state and control variables $n$ and $m$, and the initial and terminal states $x_0$ and $x_f$, respectively. Define $z(t,\lambda_0):=x(t)$.
\item[Step 1] ({\em Near-miss function}) Solve \eqref{eqn:lin_sys} with ICs in \eqref{eqn:IC}(i) to find $z(t_f,0) = x(t_f)$.  \\ Set $\varphi(0) :=  z(t_f,0)-x_f$.
\item[Step 2] ({\em Jacobian}) For $i = 1,\ldots,n$, solve \eqref{eqn:lin_sys} with ICs in \eqref{eqn:IC}(ii), to get $z(t_f,e_i)$. \\ 
Set $\beta_i(t) := z(t_f,e_i) - z(t_f,0)$ and $J_\varphi(0) := \left[\beta_1(t)\ |\ \dots\ |\ \beta_n(t) \right]$.
\item[Step 3] ({\em Missing IC}) Solve\ \ $J_{\varphi}(0)\,\lambda^{DR}_0 := -\varphi(0)$\ \ for\ \ $\lambda^{DR}_0$. 
\item[Step 4] ({\em Projector onto ${\cal A}$}) Solve \eqref{eqn:lin_sys} with ICs in \eqref{eqn:IC}(iii) to find $x(t)$ and $\lambda^{DR}(t)$.  \\ 
Set  $P_{\cal{A}}(x^-,u^-)(t) := (y(t),v(t))$ where $y(t) = x(t)$ and $v(t) = u^-(t)-B(t)^T\lambda^{DR}(t)$.
\end{description}
\end{algorithm}

\subsection{A conjecture for the costates for Problem~(P)}
Recall that the optimal control for Problem~(P) is given by cases in~\eqref{eqn:u_gen}.
% \begin{eqnarray}\label{eqn:u}
% u_j(t) = \left\{\begin{array}{rl}
%    \underline{u}_j, & \mbox{if } \frac{1}{r_j(t)}b_j(t)^T\lambda(t) \geq \underline{u}_j, \\[2mm]
%   -\frac{1}{r_j(t)}b_j(t)^T\lambda(t), & \mbox{if } \underline{u}_j\leq -\frac{1}{r_j(t)}b_j(t)^T\lambda(t) \leq \overline{u}_j, \\[2mm]
%   \overline{u}_j, & \mbox{if } \frac{1}{r_j(t)}b_j(t)^T\lambda(t) \leq \overline{u}_j,
% \end{array} \right.
% \end{eqnarray}
% for all $t\in[t_0,t_f], \ j=1,\dots,m$. \\[5mm]
A \emph{junction time} $\overline{t}_j$ is a time when the control $u_j(t)$ falls into two cases of \eqref{eqn:u_gen} simultaneously, i.e. a point in time where a control constraint transitions from ``active'' to ``inactive,''  or vice versa.  This definition of a junction time becomes important in the following conjecture, which has been formulated and tested by means of extensive numerical experiments.

\begin{conjecture}\label{con:costate}
Let $\lambda^{DR}(t)$ be the costate variable emerging from the projector into $\cal{A}$ computed in Algorithm \ref{alg:projA} and $\lambda(t)$ be the costate variable emanating from Problem (P).  Let $\overline{t}_j$ be a junction time for some $u_j$, $j=1,\dots,m$, such that $b_j(\overline{t}_j)^T\lambda^{DR}(\overline{t}_j) \neq 0$. Define
\[
\alpha:=\dfrac{-r_j(\overline{t}_j)u_j(\overline{t}_j)}{b_j(\overline{t}_j)^T\lambda^{DR}(\overline{t}_j)}\,.
\]
Then
\begin{equation}\label{eqn:conj}
\lambda(t) = \alpha \lambda^{DR}(t)\,. 
\end{equation}
% where {\color{cyan} the scalar} $\alpha = \dfrac{-r_j(\overline{t}_j)u_j(\overline{t}_j)}{b_j(\overline{t}_j)^T\lambda^{DR}(\overline{t}_j)}$.
\end{conjecture}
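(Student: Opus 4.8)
The plan is to prove the stronger statement that, once the iterates in Algorithm~\ref{alg:DR} have converged to a fixed point of the Douglas--Rachford operator $T$ (equivalently, to the optimal pair $(\xs,u^\ast)$ of Problem~(P)), the two costates are proportional with the specific constant $1/\beta$, and then to identify $1/\beta$ with the scalar $\alpha$ defined through the junction time. I would begin at the fixed point, where $(\xs,u^\ast)=\prox_f(x^\infty,u^\infty)$ and the input to the last projection onto $\cal A$ is $(x^-,u^-)=2(\xs,u^\ast)-(x^\infty,u^\infty)$. Writing the first-order optimality conditions underlying Theorem~\ref{thm:proxf} in subdifferential form produces normal-cone multipliers $\eta_x\in N_{\cal B}(\xs)$ and $\nu\in N_{[\underline u,\overline u]}(u^\ast)$ with $\eta_x=\xs-x^--\beta Q\xs$ and, from \eqref{eqn:ProjA_u}, $B^T\lambda^{DR}=u^--u^\ast=-\beta R\,u^\ast-\nu$. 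Substituting the expression for $x^-$ into the costate equation \eqref{Pg_costate} of Problem~(Pg) then yields the clean form $\dot\lambda^{DR}=-A^T\lambda^{DR}-\beta Q\xs-\eta_x$.

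The heart of the argument is to compare this with the adjoint equation \eqref{eqn:adj} of Problem~(P), namely $\dot\lambda=-A^T\lambda-Q\xs-(\mu^1-\mu^2)$. Both equations share the homogeneous part $-A^T(\cdot)$, so it suffices to show that the forcing terms are proportional. Matching the $Q\xs$ contributions on a subinterval where no state constraint is active (so $\mu^1=\mu^2=0$ and $\eta_x=0$) forces the proportionality constant to equal $1/\beta$ wherever $Q\xs\neq 0$; the remaining requirement is the active-set/multiplier identity $\mu^1-\mu^2=\tfrac1\beta\,\eta_x$, which is plausible because $u^\ast$, and hence the active box constraints, is common to both problems and the two multipliers lie in the same normal-cone direction. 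Granting this, the difference $h:=\lambda-\tfrac1\beta\lambda^{DR}$ solves the homogeneous adjoint equation $\dot h=-A^T h$. On any control-unconstrained arc, the interior control law $u^\ast=-R^{-1}B^T\lambda$ from \eqref{eqn:u(t)}, together with $B^T\lambda^{DR}=-\beta R\,u^\ast$ (since $\nu=0$ there), gives $B^T h\equiv0$; invoking the controllability assumption on $(A,B)$ (equivalently, observability of the adjoint system), the only solution of $\dot h=-A^T h$ with $B^T h\equiv0$ is $h\equiv0$, whence $\lambda=\tfrac1\beta\lambda^{DR}$ throughout $[t_0,t_f]$.

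It then remains to identify the conjecture's scalar with $1/\beta$. At a junction time $\overline t_j$ the control $u_j$ lies simultaneously in two branches of \eqref{eqn:u_gen}, so by continuity the box multiplier satisfies $\nu_j(\overline t_j)=0$, and the relation $B^T\lambda^{DR}=-\beta R\,u^\ast-\nu$ gives $b_j(\overline t_j)^T\lambda^{DR}(\overline t_j)=-\beta\,r_j(\overline t_j)\,u_j(\overline t_j)$. Substituting this into the definition of $\alpha$ yields $\alpha=-r_j u_j/(b_j^T\lambda^{DR})=1/\beta$, and combining with the proportionality just established gives $\lambda(t)=\alpha\,\lambda^{DR}(t)$, which is \eqref{eqn:conj}.

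I expect two main obstacles. First, the multiplier identity $\mu^1-\mu^2=\tfrac1\beta\eta_x$ presupposes that the Douglas--Rachford limit exactly reproduces the optimality system of Problem~(P) with matching active sets; since only weak convergence is guaranteed (as noted after Algorithm~\ref{alg:DR}), passing rigorously to the limit and showing that the box multipliers of the splitting coincide, up to the factor $1/\beta$, with the state-constraint multipliers $\mu^1,\mu^2$ is delicate. Second, the step annihilating $h$ only delivers $B^T h\equiv0$ on the control-unconstrained portion of $[t_0,t_f]$, so concluding $h\equiv0$ requires controllability on that sub-collection of arcs (or extra regularity, such as analyticity of $A,B$); treating the case in which the interior arcs are short or absent is the technically hardest point, and is presumably why the result is stated as a conjecture.
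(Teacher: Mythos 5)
The paper does not prove this statement at all: it is advanced explicitly as a conjecture, ``formulated and tested by means of extensive numerical experiments,'' so there is no proof of record to compare yours against --- your attempt goes beyond what the paper does. Judged on its own merits, the core of your fixed-point bookkeeping is sound, and in fact it can be organized into a clean verification theorem. At a fixed point $(x^\infty,u^\infty)$ of $T$ one has $(x^\ast,u^\ast)=\prox_f(x^\infty,u^\infty)$ and $P_{\cal A}\bigl(2(x^\ast,u^\ast)-(x^\infty,u^\infty)\bigr)=(x^\ast,u^\ast)$ (note the fixed point itself is \emph{not} the optimal pair, contrary to your parenthetical ``equivalently''; you nonetheless use the correct relation afterwards). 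The optimality conditions of Problem~(Pf) give pointwise normal-cone elements $\eta_x:=x^\infty-x^\ast-\beta Qx^\ast$ and $\nu:=u^\infty-u^\ast-\beta Ru^\ast$ for the state and control boxes, and Theorem~\ref{thm:proxg} at the fixed point gives $B^T\lambda^{DR}=-\beta Ru^\ast-\nu$ and $\dot\lambda^{DR}=-A^T\lambda^{DR}-\beta Qx^\ast-\eta_x$, exactly as you derive. Defining $\lambda:=\tfrac1\beta\lambda^{DR}$, $\mu^1:=\tfrac1\beta\max(\eta_x,0)$, $\mu^2:=\tfrac1\beta\max(-\eta_x,0)$ (componentwise), one checks directly that \eqref{eqn:adj}, \eqref{eqn:mu1}--\eqref{eqn:mu2} and \eqref{eqn:maxPrin} all hold, and at a control junction $\nu_j(\overline t_j)=0$ yields $\alpha=1/\beta$ (the hypothesis $b_j(\overline t_j)^T\lambda^{DR}(\overline t_j)\neq0$ guarantees $u_j(\overline t_j)\neq0$). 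This proves Conjecture~\ref{con:costate} in its \emph{existence} reading: $\alpha\lambda^{DR}$ is a valid Pontryagin costate for (P). Stated this way your argument is cleaner than the version you wrote, in which $\mu^1,\mu^2$ are taken as given and must be matched to $\tfrac1\beta\eta_x$.

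The genuine gaps --- which you partly flag, and which are presumably why the authors left this as a conjecture --- are these. First, the conjecture asserts equality with \emph{the} costate of (P), so one needs uniqueness of the multiplier system; your route to $h\equiv0$ is circular, since $\dot h=-A^Th$ already presupposes $\mu^1-\mu^2=\tfrac1\beta\eta_x$, i.e.\ the very identity in question, and the observability step only controls $B^Th$ on control-unconstrained arcs. Moreover, for state-constrained problems the MP costate may in general carry jumps (measure multipliers with atoms), so uniqueness and absolute continuity require a separate regularity argument. Second, the $\lambda^{DR}$ of the conjecture is produced by Algorithm~\ref{alg:projA} at finite iterations of Algorithm~\ref{alg:DR}, which is only weakly convergent; transferring the exact fixed-point identities to the computed (or weak-limit) $\lambda^{DR}$ is not addressed. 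Third, your ``by continuity, $\nu_j(\overline t_j)=0$'' needs support: it does follow from $\nu_j=-\beta r_ju_j^\ast-b_j^T\lambda^{DR}$, continuity of $\lambda^{DR}$ (an ODE solution) and continuity of $u_j^\ast$ near $\overline t_j$, but continuity of the optimal control is itself not guaranteed in the state-constrained setting --- the paper's own conclusion cites the machine-tool-manipulator problem of \cite{MTM}, where the optimal control and costate jump, as a case their method cannot yet handle.
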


\begin{remark} \rm
The ability to obtain the costate variable by Conjecture \ref{con:costate} is desirable as a tool for checking that the necessary condition of optimality in~\eqref{eqn:u_gen} is satisfied. Without this conjecture we are unable to verify whether the optimality condition is satisfied when using the DR algorithm, except when a dual version of the DR algorithm is employed, as in \cite{BurCalKayMou2023}.
\proofbox
\end{remark}

Once we have calculated $\lambda$ in this way we can also find a multiplier $\mu^k$, $k = 1,2$, numerically for the case when only one state constraint is active at a given time. Suppose that only the $i$th state box constraint becomes active. By rearranging Equation \eqref{eqn:adj}, using numerical differentiation to find $\dot{\lambda}$ and assuming $\mu_i^2(t)=0$,
\begin{equation}  \label{eqn:mu1_ex}
\mu_i^1(t) = -Q(t)x(t)-A(t)^T\lambda(t)-\dot{\lambda}(t).
\end{equation}
If $\mu_i^1(t)=0$, then we compute
\begin{equation}\label{eqn:mu2_ex}
\mu_i^2(t) = Q(t)x(t)+A(t)^T\lambda(t)+\dot{\lambda}(t).
\end{equation}
With \eqref{eqn:mu1_ex}, or with \eqref{eqn:mu2_ex}, the complementarity conditions in~\eqref{eqn:mu1} or \eqref{eqn:mu2} can now be checked numerically.

\section{Numerical Experiments}\label{sec:experiments}

We will now introduce two example problems. Along with posing the optimal control problems we also give plots of their optimal controls, states, costates and multipliers with vertical lines signifying the regions where the state constraints become active.

\subsection{Harmonic oscillator}
Problem (PHO) below contains the dynamics of a harmonic oscillator which is typically used to model a point mass with a spring. The dynamics are given as $\ddot{y}(t) + \omega_0^2 y(t) = f(t)$ where $\omega_0 > 0$ is known as the natural frequency and $f(t)$ is some forcing. In a physical system $y$ represents the position of a unit mass, $\dot{y}$ is the velocity of said mass, the natural frequency is expressed as $\omega_0=\sqrt{k}$ where $k$ is the stiffness of the spring producing the harmonic motion and $f$ is the restoring force. In addition to the restoring force we will introduce another force $u_1$ that will affect the velocity $\dot{y}$ directly. We let $x_1:=y$, $x_2:=\dot{y}$ and $u_2:=f$ to arrive at $\dot{x}_1(t) = x_2(t)+u_1(t)$, $\dot{x}_2(t) = -\omega_0^2x_1(t)+u_2(t)$.

In this example problem the objective contains the squared sum of all four variables in the system. It is common to see this problem with the objective of minimizing the energy of the control variable but in this case we have also included the state variables to test the algorithm with a slightly more involved objective. The focus of this research is control- and state-constrained problems so the constraints are added as in Problem~(P).

\begin{align*}
\mbox{(PHO) }\left\{\begin{array}{rl}
\ds\min & \ \ \ds\frac{1}{2}\int_{0}^{2\pi} x_1^2(t) + x_2^2(t) + u_1^2(t) + u_2^2(t)\,dt  \\[5mm] 
\mbox{subject to} & \ \ \dot{x}_1(t) = x_2(t) + u_1(t)\,,\ \ x_1(0) = 0\,,\ \ x_2(2\pi) = 0\,, \\[2mm]
& \ \ \dot{x}_2(t) = -4x_1(t) + u_2(t)\,,\ \ x_2(0) = 1\,,\ \ x(2\pi) = 0\,, \\[2mm]
& \ \ \underline{u}\leq u(t)\leq \overline{u}\,, \ \ \underline{x}\leq x(t)\leq \overline{x}
\end{array} \right.
\end{align*}

\subsection{Simple spring--mass system}

The simple spring-mass system is another physical system that can be easily visualised, see \cite{SchSco2021}. This problem contains two masses and two springs connected in sequence with dynamics given by $m_1\ddot{y}_1(t)+(k_1+k_2)y_1(t)-k_2y_2=f_1(t)$, $m_2\ddot{y}_2(t)-k_2y_1(t)+k_2y_2(t)=f_2(t)$ where $m_1,m_2$ are the two masses, $k_1,k_2$ are the spring coefficients (stiffness) and $f_1(t),f_2(t)$ are the forces applied to $m_1,m_2$ respectively. Let $x_1:=y_1$, $x_2:=\dot{y}_1$, $x_3:=y_2$, $x_4:=\dot{y}_2$, $u_1 := f_1$, $u_2 := f_2$, $m_1=m_2=1$, $k_1=1$ and $k_2=2$ then we retrieve the system in Problem (PSM). This dynamical system furnishes an optimal control problem with four state variables and two control variables. As in (PHO) we add state and control constraints and set the objective as the integral of the squared sum of the state and control variables.
\begin{align*}
\mbox{(PSM) }\left\{\begin{array}{rl}
\ds\min & \ \ \ds\frac{1}{2}\int_{0}^{2\pi} x_1^2(t) + x_2^2(t) + x_3^2(t) + x_4^2(t) + u_1^2(t) + u_2^2(t)\,dt  \\[5mm] 
\mbox{subject to} & \ \ \dot{x}_1(t) = x_2(t)\,,\ \ x_1(0) = 0\,,\ \ x_1(2\pi) = 0\,, \\[2mm]
& \ \ \dot{x}_2(t) = -3x_1(t) + 2x_3(t) + u_1(t)\,,\ \ x_2(0) = 1\,,\ \ x_2(2\pi) = 0\,, \\[2mm]
& \ \ \dot{x}_3(t) = x_4(t)\,,\ \ x_3(0) = 1\,,\ \ x_3(2\pi) = 0\,, \\[2mm]
& \ \ \dot{x}_4(t) = 2x_1(t) - 2x_3(t) + u_2(t)\,,\ \ x_4(0) = -1\,,\ \ x_4(2\pi) = 0\,, \\[2mm]
& \ \ \underline{u}\leq u(t)\leq \overline{u}\,, \ \ \underline{x}\leq x(t)\leq \overline{x}
\end{array} \right.
\end{align*}

\subsection{Numerical discussion and comparisons}
\textbf{Technical Specifications.}
In the numerical experiments we used {\sc Matlab} version 2023b with DR and compared with AMPL--Ipopt optimization computational suite \cite{AMPL,WacBie2006} with Ipopt version 3.12.13. We chose to make comparisons to Ipopt since it is a free and easily obtainable solver used for problems such as those presented in this paper (also see~\cite{BanKay2013}). All computations set $\epsilon=10^{-8}$ from Algorithm \ref{alg:DR} or in the case of Ipopt $\text{tol}=10^{-8}$ and were run on a PC with an i5-10500T 2.30GHz processor with 8GB RAM. For the two examples (PHO) and (PSM) we experimented with a case that only had control constraints and a case that also had an added state constraint. The problem specifications can be found in Table \ref{tbl:probs} along with the choices of $\gamma$ that were used in the implementation of DR.

\begingroup
\setlength{\tabcolsep}{6pt} % Default value: 6pt
\renewcommand{\arraystretch}{1.5} % Default value: 1

\begin{table}[t]
    \centering
    \begin{tabular}{ccll}
        Problem & Case & $\gamma$ & Constraints  \\ \hline \hline
        \multirow{2}{3em}{(PHO)} & Case 1 & $0.60$ & $-0.4\leq u_1(t)\leq0.1, \ -0.5\leq u_2(t)\leq0.1$ \\
                                 & Case 2 & $0.95$ & $-0.4\leq u_1(t)\leq0.1, \ -0.5\leq u_2(t)\leq0.1, \ -0.025\leq x_1(t)$ \\ \hline
        \multirow{2}{3em}{(PSM)} & Case 1 & $0.55$ & $-0.5\leq u_1(t)\leq0.5, \ -0.4\leq u_2(t)\leq0.4$ \\
                                 & Case 2 & $0.95$ & $-0.5\leq u_1(t)\leq0.5, \ -0.4\leq u_2(t)\leq0.4, \ -0.2\leq x_1(t)$ \\ \hline
    \end{tabular}
    \caption{Problem cases.}
    \label{tbl:probs}
\end{table}
\endgroup

\textbf{Generation of ``True'' Solutions.}
Since we do not have analytical solutions to these problems we have generated higher accuracy numerical solutions to our problems that we will use as ``true'' solutions in our error calculations. In the Case 1 examples (only control constraints present) DR was able to successfully converge to an acceptable solution using $N=10^7, \epsilon=10^{-12}$ without reaching 200 iterations (the maximum number of iterations we allowed). The 8GB RAM provided insufficient memory for Ipopt to generate a solution with such a high number of discretization points so DR was used to generate the ``true'' solutions in this case. In Case 2 (state constrained case) DR was unable to converge to an accepted solution in less than 200 iterations so we instead relied on Ipopt. Due to the memory limitations the ``true'' solution for (PHO) Case 2 was generated with $N=10^6,\ \epsilon=10^{-12}$ and (PSM) Case 2 used $N=7\times10^5,\ \epsilon=10^{-12}$.

\textbf{Choice of an Algorithmic Parameter.}
The values of $\gamma$ in Table~\ref{tbl:probs} were decided by generating plots displaying the number of iterations required for DR to find an acceptable solution for 500 values of $\gamma$ in the interval $(0,1)$. For both (PHO) and (PSM) in Case 2 for all values of $\gamma$ that were tested DR required more than the maximum 200 iterations to converge. For these cases we instead generated plots that compared the errors in the states and controls for the different values of $\gamma$. A specific value of $\gamma$ that would provide the smallest errors was not obvious since many values provided similar performance but values closer to 1 appeared optimal thus the choice of $\gamma=0.95$ was made for these experiments.

\begin{figure}[t]
\begin{subfigure}{0.5\textwidth}
    \centering
    \includegraphics[width=7.5cm]{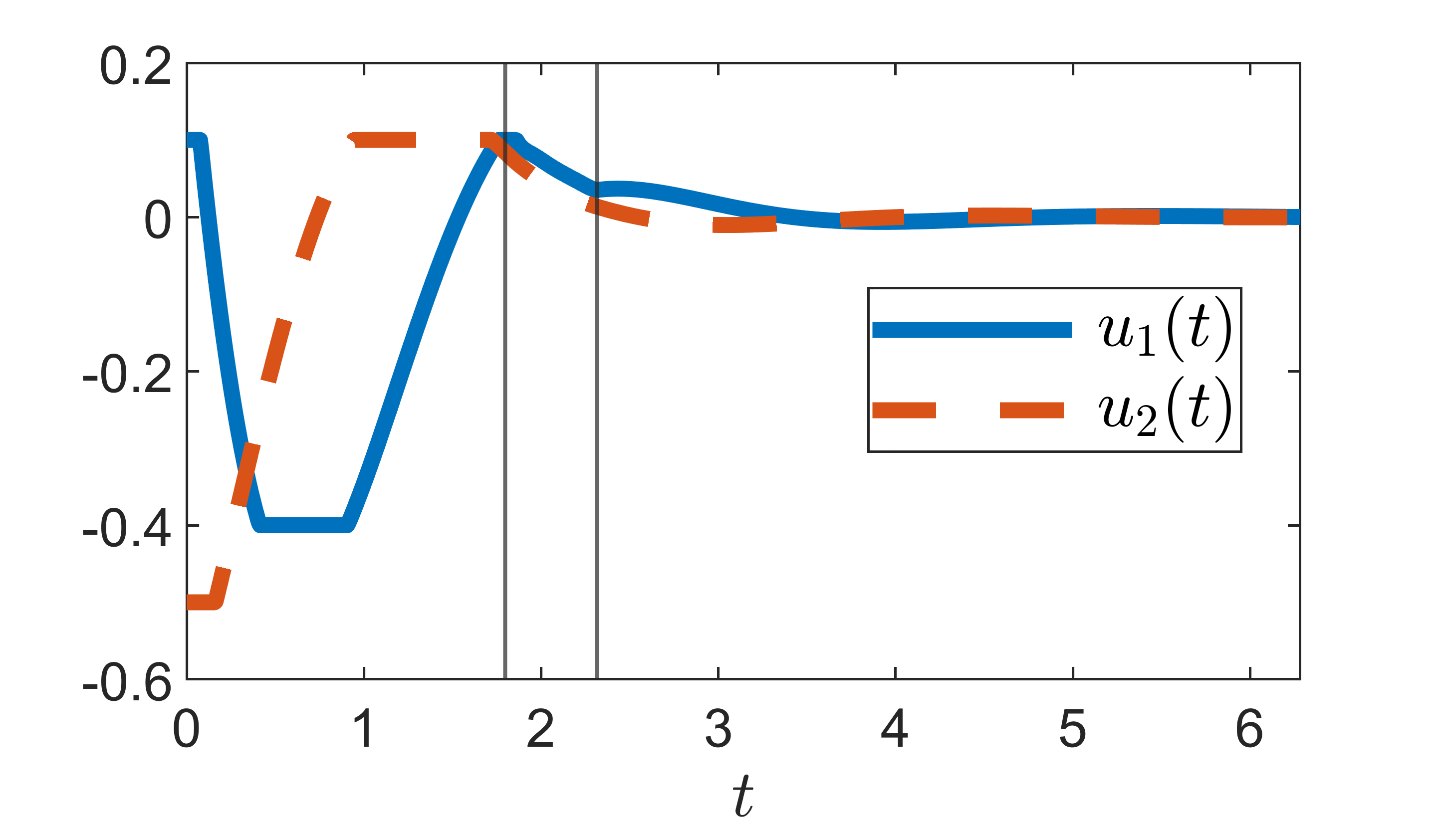}
    \includegraphics[width=7.5cm]{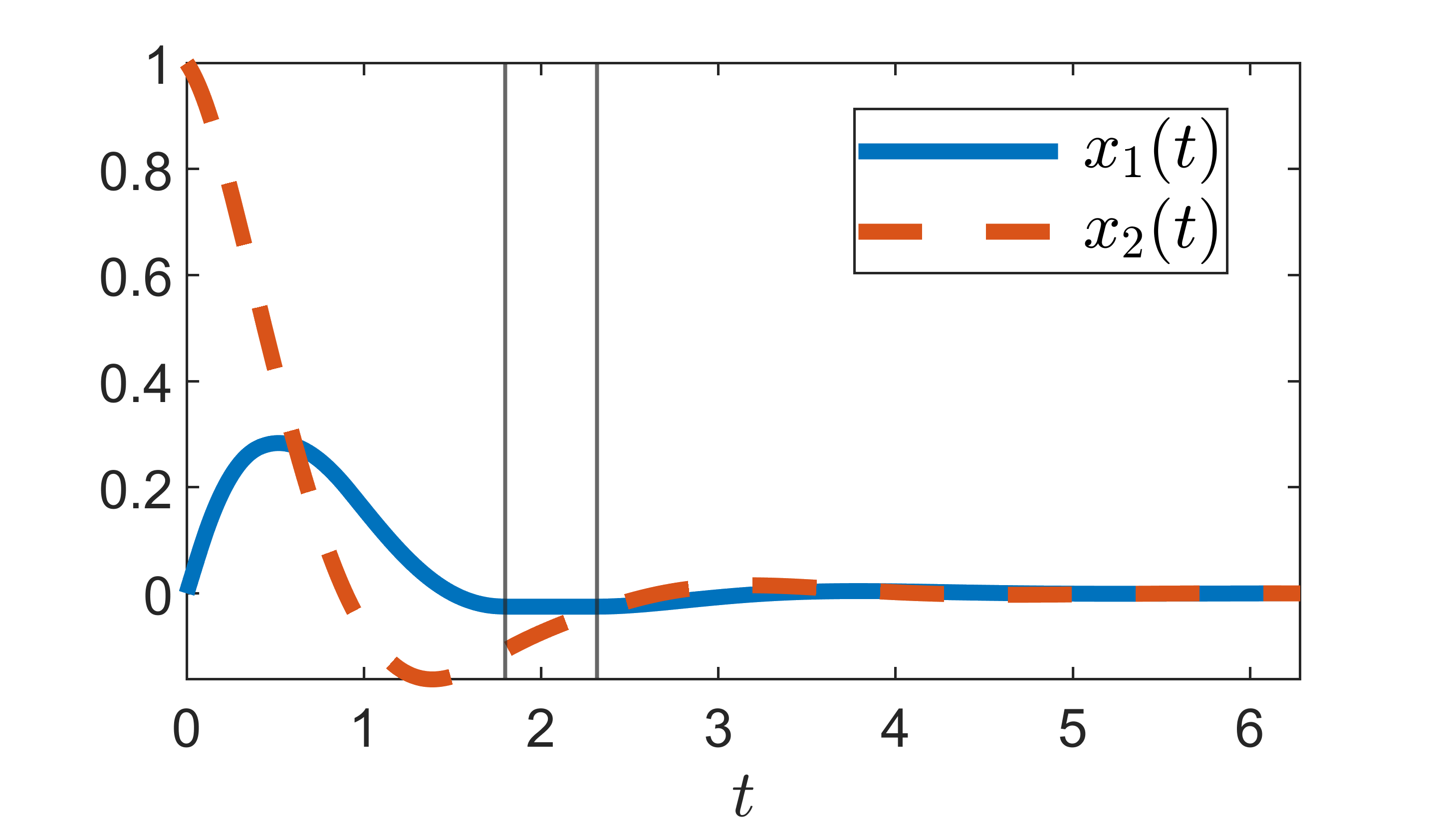}
\end{subfigure}
\begin{subfigure}{0.5\textwidth}
    \centering
    \includegraphics[width=7.5cm]{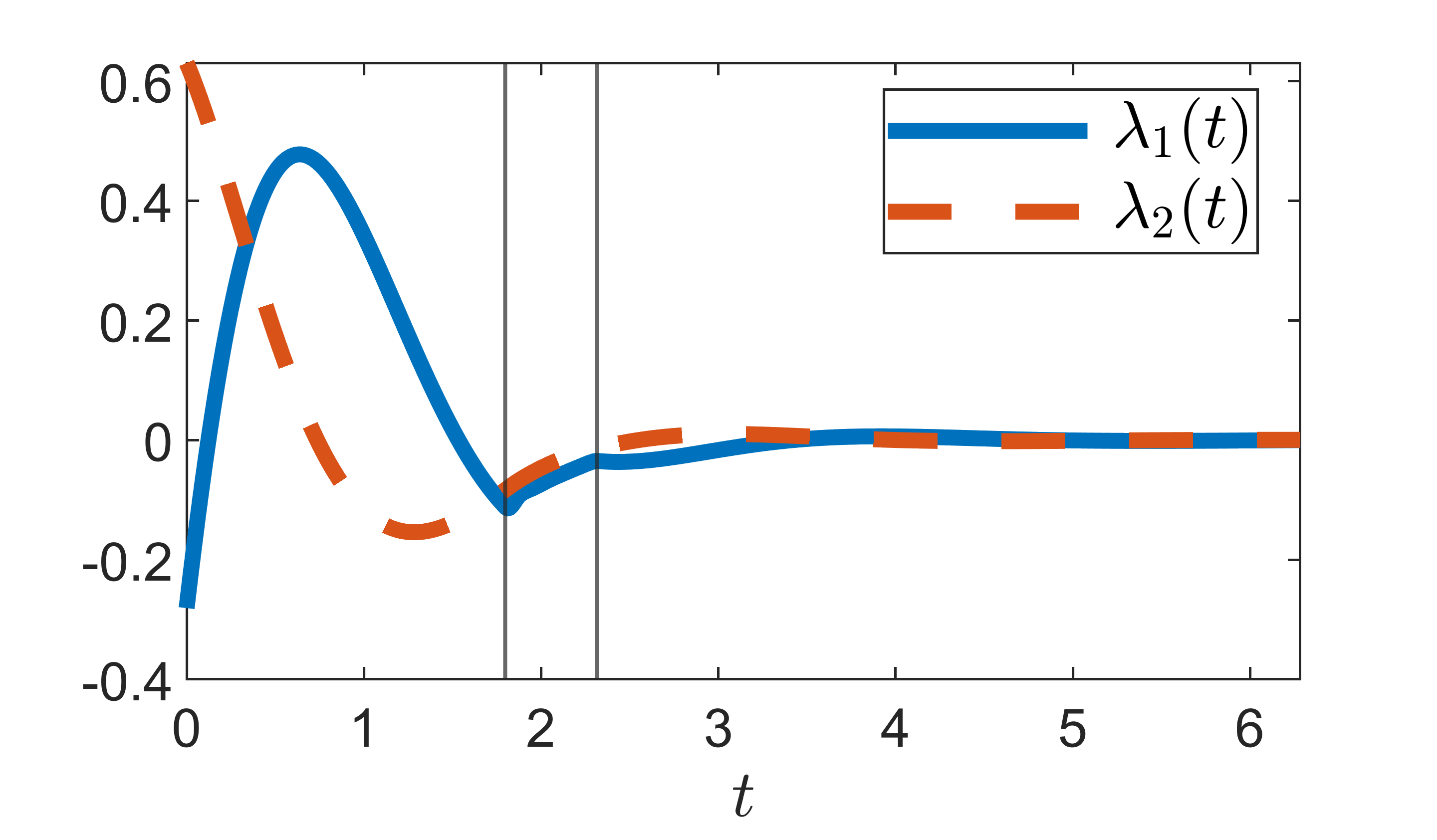}
    \includegraphics[width=7.5cm]{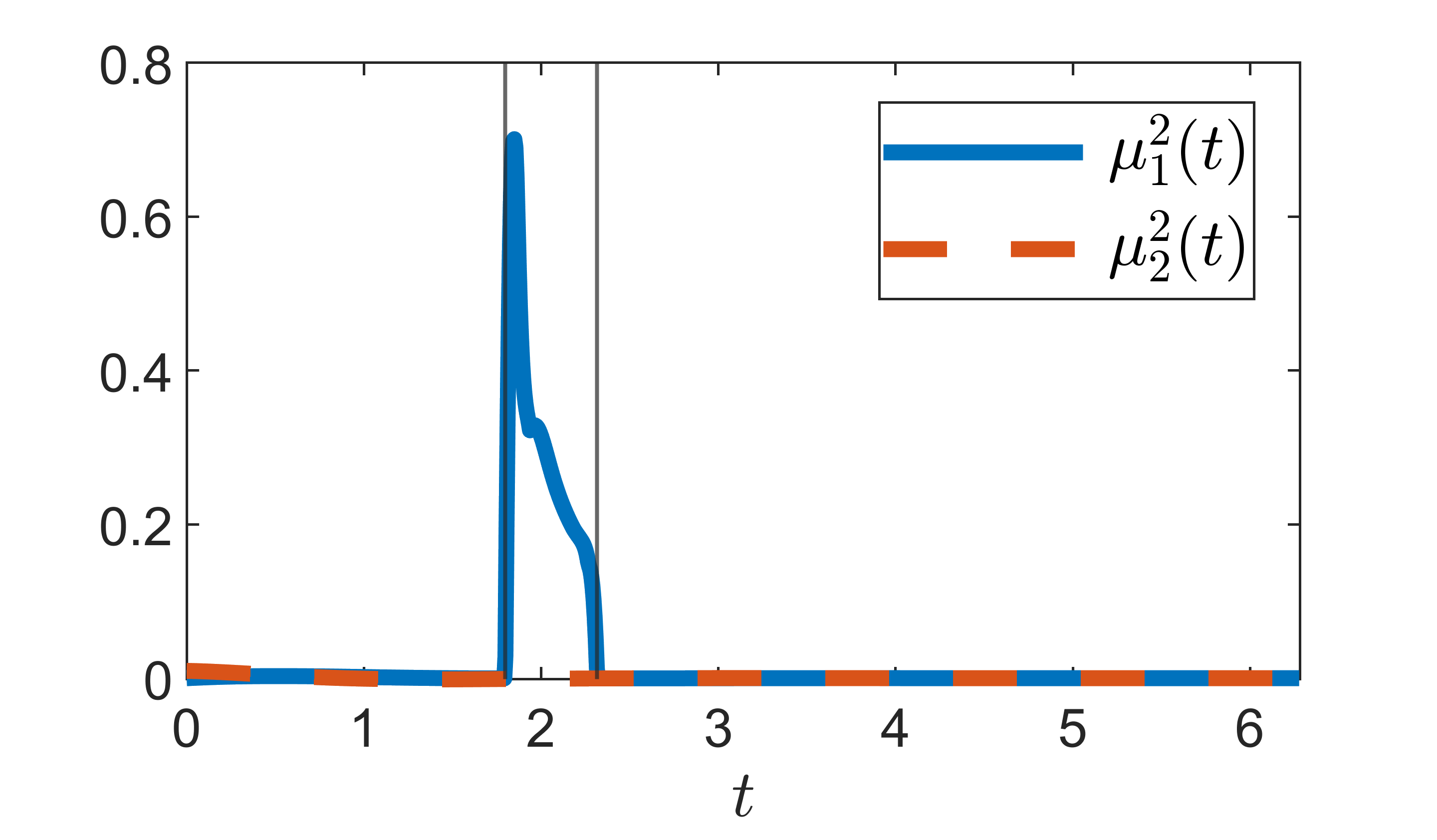}
\end{subfigure}
\caption{(PHO) Case 2 plots (see Table \ref{tbl:probs}) using DR with $N=10^3, -0.025\leq x_1(t)$. Vertical lines indicate the interval in which the state constraint becomes active.}
\label{fig:plots_PHO}
\end{figure}

\begin{figure}[th!]
\begin{subfigure}{0.5\textwidth}
    \centering
    \includegraphics[width=7.5cm]{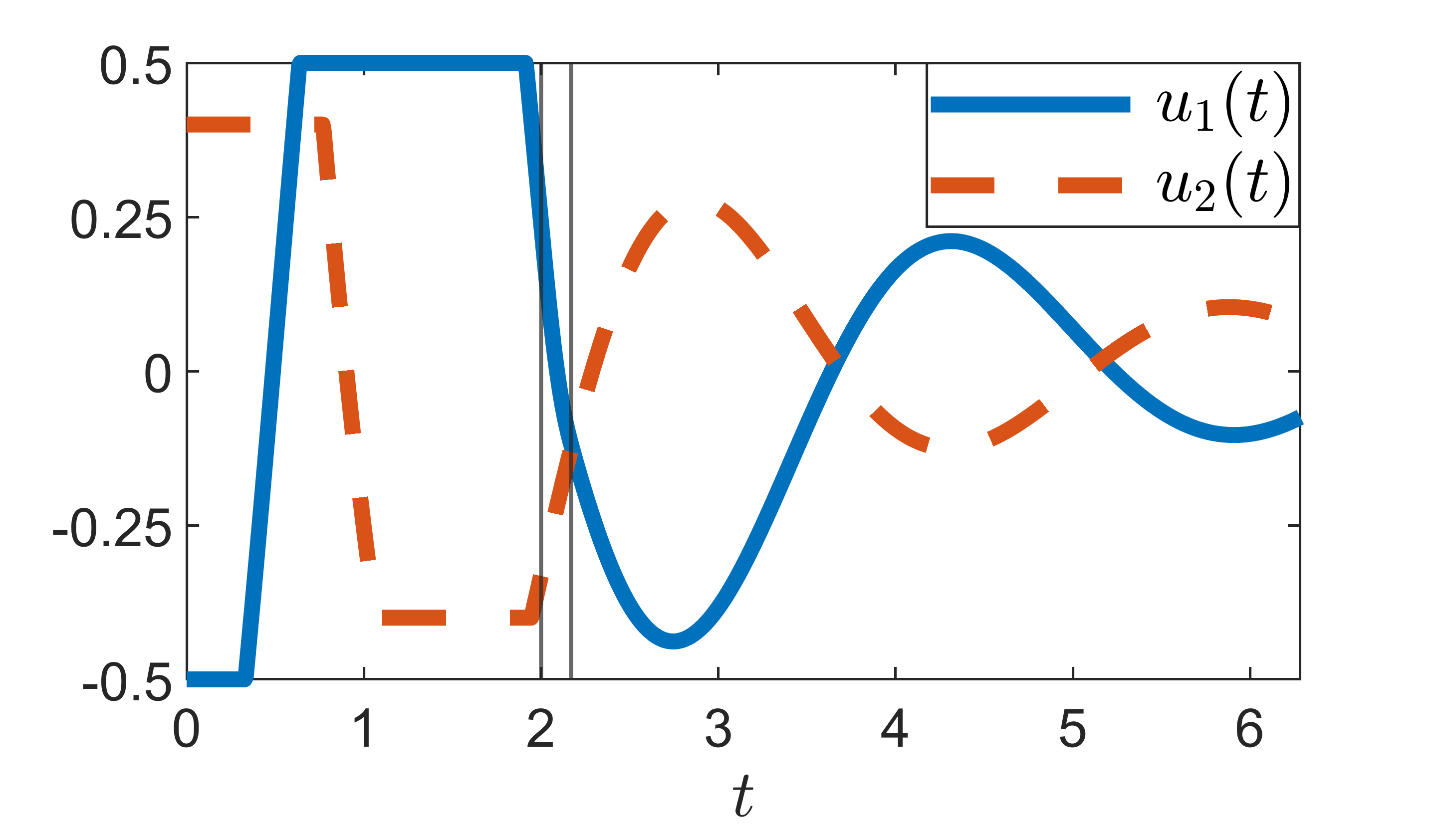}
    \includegraphics[width=7.5cm]{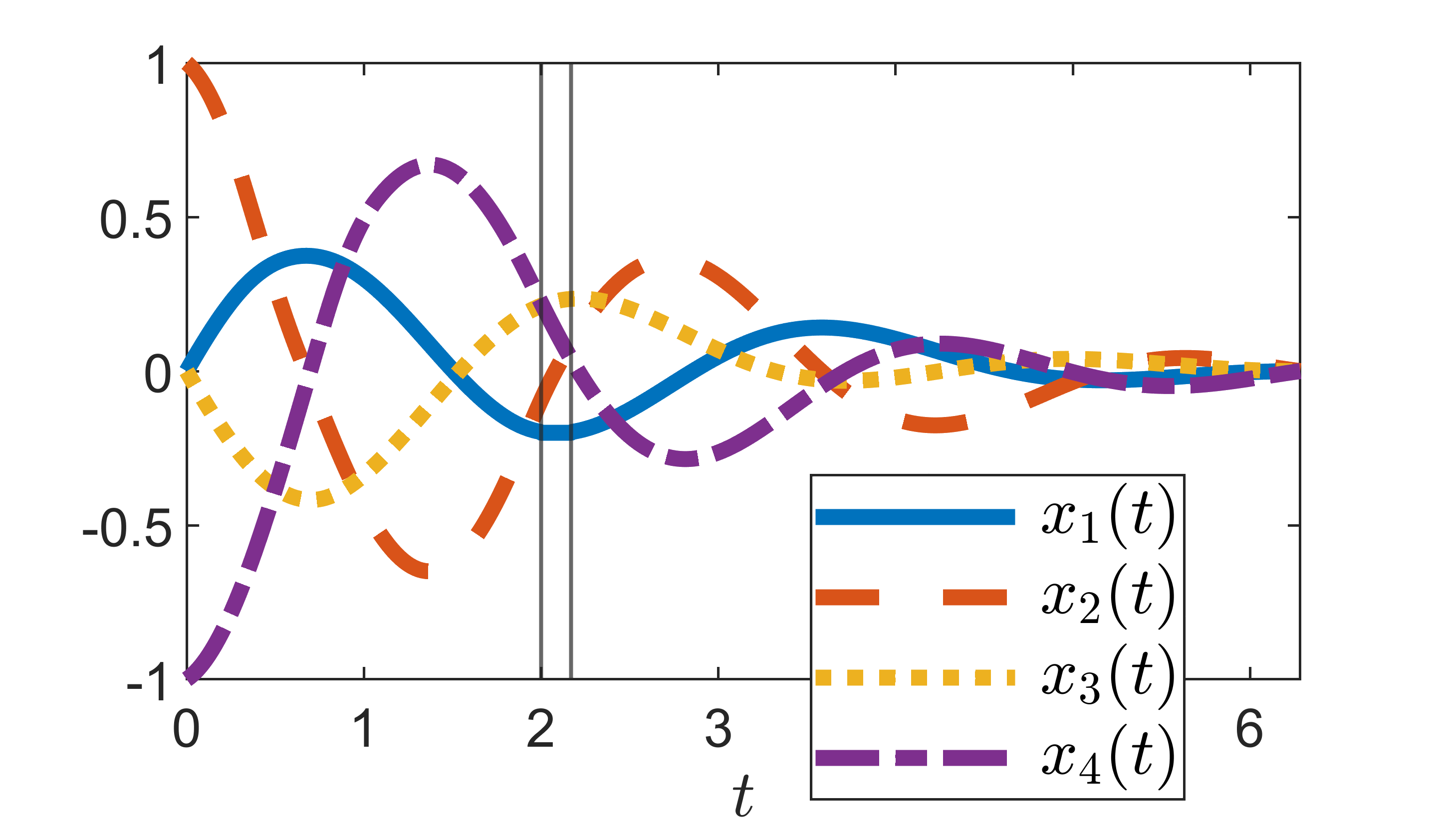}
\end{subfigure}
\begin{subfigure}{0.5\textwidth}
    \centering
    \includegraphics[width=7.5cm]{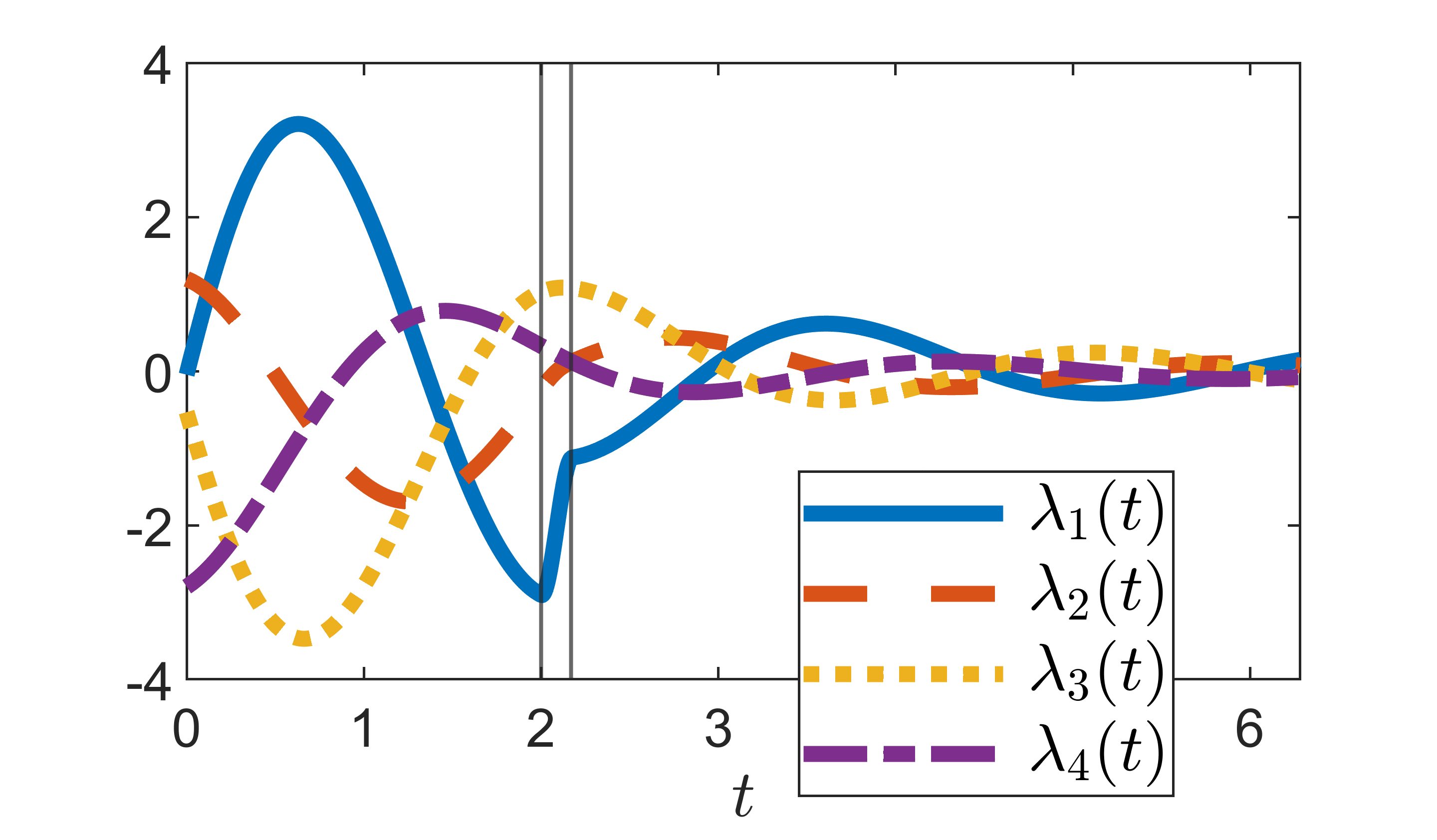}
    \includegraphics[width=7.5cm]{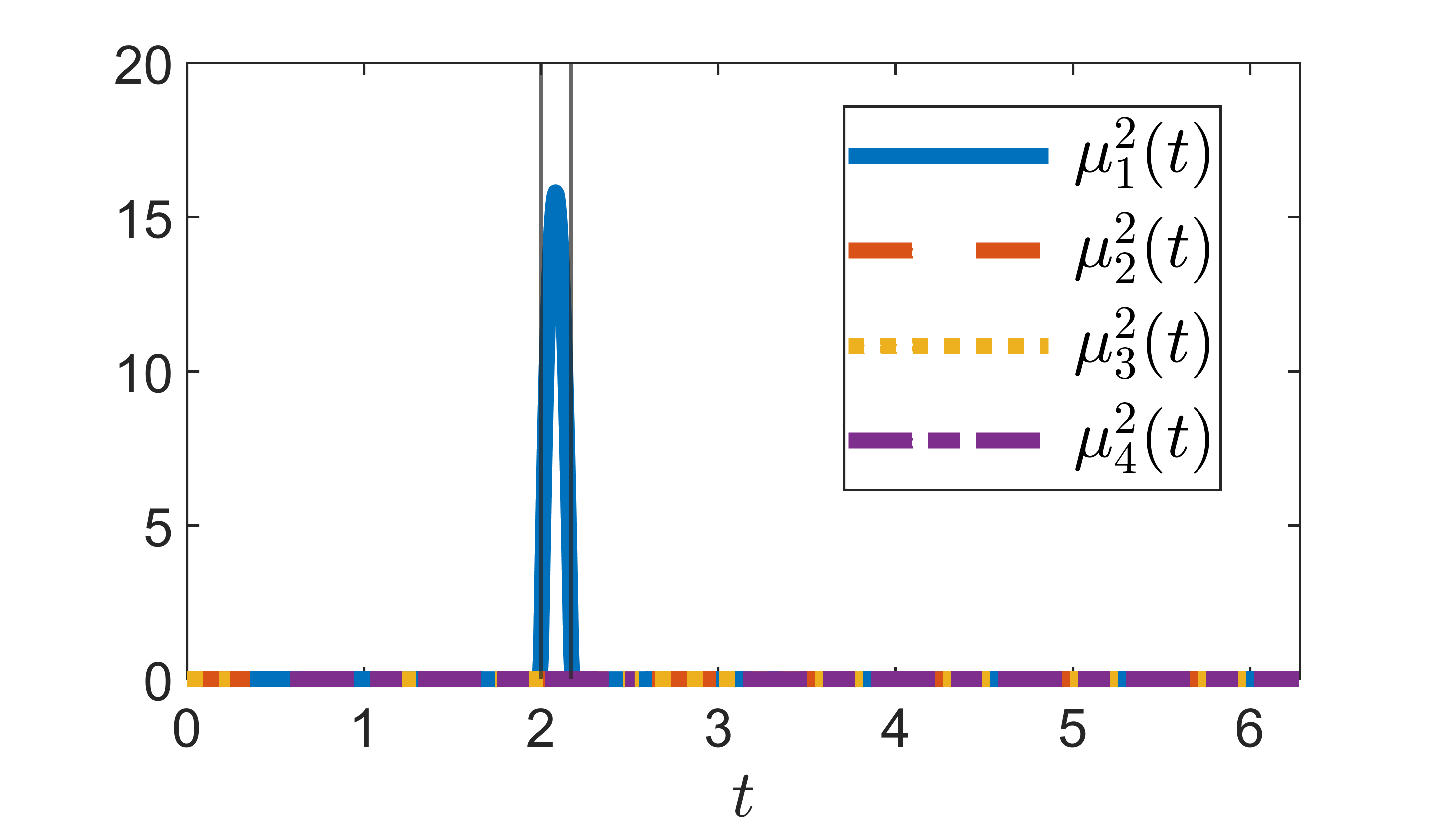}
\end{subfigure}
\caption{(PSM) Case 2 plots (see Table \ref{tbl:probs}) using DR with $N=10^3, -0.2\leq x_1(t)$. Vertical lines indicate the interval in which the state constraint becomes active.}
\label{fig:plots_PSM}
\end{figure}

\begin{figure}[t]
    \centering
    \includegraphics[width=12cm]{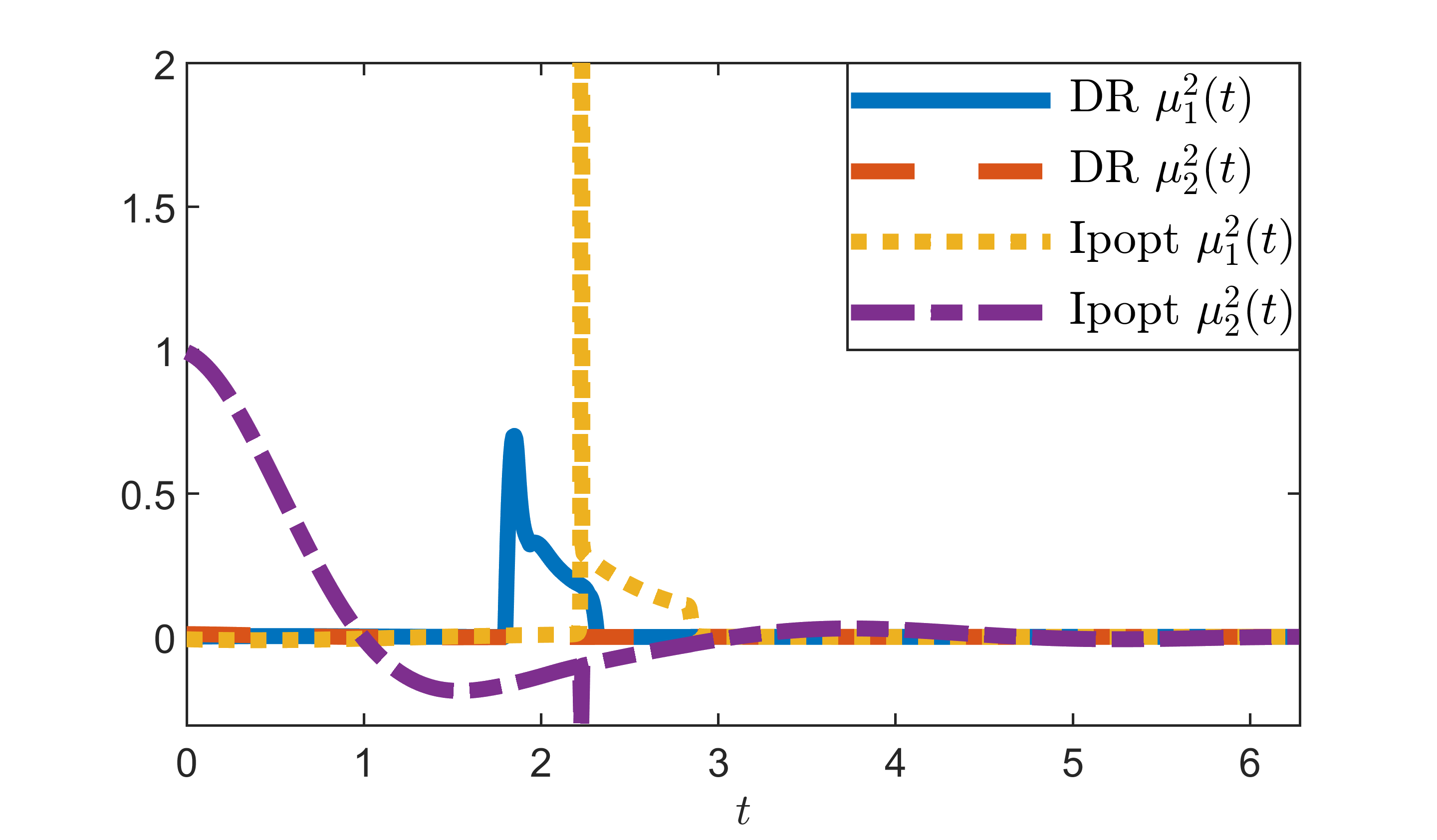}
    \caption{Multipliers $\mu_1^2,\mu_2^2$ for Case 2 (PHO) with $N=10^3$ using DR (solid lines) and Ipopt (dotted lines). Note that $\mu_1^2(t)$ found by Ipopt attains a maximum value of 33, which is not shown in the graph.}
    \label{fig:mu}
\end{figure}

\textbf{Graphical Results.}\ \ 
In Figures~\ref{fig:plots_PHO}--\ref{fig:plots_PSM} we have plots for (PHO) and (PSM) using DR. We have generated similar figures using Ipopt as well but since they mostly overlap Figures~\ref{fig:plots_PHO}--\ref{fig:plots_PSM} we will not show them all here but will point out the differences. In Figure~\ref{fig:mu} we give the multiplier vector $\mu^2$ components for Case 2 (PHO) using DR and Ipopt. As indicated by the black vertical lines in the bottom left plot of Figure~\ref{fig:plots_PHO} when using DR the constraint on $x_1$ is active over a time interval that aligns with the interval where $\mu_1^2$ is positive, as expected from Equations \eqref{eqn:mu1}--\eqref{eqn:mu2}. 

In the results from Ipopt we observe that the state constraint only became active at a single point in time but Figure~\ref{fig:mu} shows that $\mu_1^2$ from Ipopt (yellow dotted line) is positive for a larger interval on time thus Equation \eqref{eqn:mu2} is violated. This appears to be a numerical error that is not present in the DR results since there is an interval of points around the spike where the state constraint $-0.025\leq x_1(t)$ becomes active. When $N=10^4,10^5$ for (PHO) we observe that when using Ipopt the lower bound on $x_1$ is never reached though again we see the interval of points that are almost equal to the lower bound.

In Figure~\ref{fig:mu} we also see a discrepancy in $\mu_2^2$. Since we have not imposed a constraint on the variable $x_2$ Equation~\eqref{eqn:mu2} implies that $\mu_2^2(t)=0$ for all $t\in[t_0,t_f]$. We can see in Figure~\ref{fig:mu} that $\mu_2^2$ from Ipopt (purple dotted line) fails to satisfy this requirement while $\mu_2^2$ from DR is, at least to the eye, equal to zero.

Another difference between the multipliers $\mu$ from DR and Ipopt is the maximum values reached by the functions. In Figure~\ref{fig:mu} we see that $\mu_1^2$ obtained via DR and Ipopt have similar shapes in their plots but the maximum value reached using DR is approximately $0.7$ while from Ipopt the maximum value is approximately $33$. For (PSM) Case 2 with $N=10^3$ the maximum value that $\mu_1^2$ obtained using DR was approximately 16 while Ipopt reached approximately 310. Along with the functions having very different maximum values we noted that when generating these plots for $N=10^3,10^4,10^5$ the results from DR were clearly converging to a single function $\mu_1^2$ while this was not obvious from the Ipopt results. For (PSM) we see the approximate maximum values of $\mu_1^2$ obtained when using Ipopt were 310 for $N=10^3$, 2500 for $N=10^4$ and 3000 for $N=10^5$. In the same example using DR the maximum values of the function were 16 for $N=10^3$, 14 for $N=10^4$ and 14 for $N=10^5$.

We also see some slight variation between the plots from DR and Ipopt at the junction times in the control variables (not shown in this paper in order to avoid excessive amount of visual material). The intervals where the control variables attain their boundaries using DR always appear slightly larger than those from Ipopt. The control variables in the region where they transition between active and inactive constraints appear more rounded at these corners when using Ipopt and have a more sharp transition when using DR.

\textbf{Errors and CPU Times.}
Table~\ref{tbl:exp1} contains the errors in the controls, states, costates for DR and Ipopt while Table~\ref{tbl:exp2} contains the errors in the multipliers, objective values and CPU times. The CPU times were computed as averages of over 200 runs (up to 1,000 runs in the faster examples). The values within boxes are the smaller errors and CPU times between DR and Ipopt. At a glance we can see that more often than not DR produced smaller errors and faster CPU times when compared with Ipopt. Upon closer inspection we see that in many of the Case 2 results the errors from DR and Ipopt are comparable. We note that the ''true'' solutions we are using to calculate these errors are those explained earlier in this subsection except for the multipliers $\mu^2$ in the case 1 examples. In those examples we take the zero vector as our ``true'' solution.

We observe that most of the results in Table~\ref{tbl:exp1} show a smaller error in the control variable from DR, especially in the case 1 examples where there are no state constraints. In the state variables we see that Ipopt has the smaller errors when $N=10^3,10^4$ though there is little difference compared with DR and we see an improvement in DR when $N=10^5$. Like with the error in the control variables we see that the error in the costates is smaller for DR in almost all examples.

From Table~\ref{tbl:exp2} the errors in the multipliers show an improvement in DR compared with Ipopt though the ``true'' solution in the case 2 examples was obtained using results from Ipopt which as previously mentioned did not appear to converge to a specific value as we increased $N$ so the quality of this ``true'' solution is not guaranteed. DR produced slightly smaller objective values that were closest to the ``true'' solution in almost all cases though the difference compared with Ipopt is marginal. We see that the CPU times are faster for DR especially in the examples where $N=10^5$.

\begin{landscape}
\begin{table}[t]
\centering
\small
\begin{tabular}{ScScllllllll}
 & & \multicolumn{2}{|c|}{$L^\infty$ error in control} & \multicolumn{2}{c|}{$L^\infty$ error in states} & \multicolumn{2}{|c}{$L^\infty$ error in costates} \\ \cline{3-4}  \cline{5-6}  \cline{7-8}
    $N$ & Problem & \multicolumn{1}{|c}{DR} & \multicolumn{1}{c|}{Ipopt} & \multicolumn{1}{c}{DR} & \multicolumn{1}{c|}{Ipopt} & \multicolumn{1}{|c}{DR} & \multicolumn{1}{c|}{Ipopt} \\ \hline\hline
    $10^3$ & (PHO) - Case 1 & $\boxed{7.9\times10^{-3}}$ & $1.4\times10^{-2}$ & $6.9\times10^{-3}$ & $\boxed{2.7\times10^{-3}}$ & $\boxed{6.3\times10^{-3}}$ & $1.4\times10^{-2}$ \\
    & (PHO) - Case 2 & $\boxed{1.4\times10^{-2}}$ & $1.6\times10^{-2}$ & $4.5\times10^{-3}$ & $\boxed{2.9\times10^{-3}}$ & $\boxed{1.4\times10^{-2}}$ & $1.6\times10^{-2}$ \\
    & (PSM) - Case 1 & $\boxed{2.3\times10^{-2}}$ & $3.8\times10^{-2}$ & $2.0\times10^{-2}$ & $\boxed{1.8\times10^{-2}}$ & $\boxed{5.5\times10^{-2}}$ & $1.5\times10^{-1}$ \\
    & (PSM) - Case 2 & $\boxed{7.1\times10^{-2}}$ & $1.1\times10^{-1}$ & $3.8\times10^{-1}$ & $\boxed{3.7\times10^{-1}}$ & $\boxed{8.4\times10^{-1}}$ & $1.5\times10^0$ \\
    \hline
    $10^4$ & (PHO) - Case 1 & $\boxed{7.8\times10^{-4}}$ & $3.4\times10^{-3}$ & $6.7\times10^{-4}$ & $\boxed{3.6\times10^{-4}}$ & $\boxed{7.5\times10^{-4}}$ & $1.6\times10^{-3}$ \\
    & (PHO) - Case 2 & $\boxed{1.3\times10^{-3}}$ & $3.0\times10^{-3}$ & $4.7\times10^{-4}$ & $\boxed{4.2\times10^{-4}}$ & $5.0\times10^{-3}$ & $\boxed{4.8\times10^{-3}}$ \\
    & (PSM) - Case 1 & $\boxed{2.2\times10^{-3}}$ & $3.6\times10^{-3}$ & $2.0\times10^{-3}$ & $\boxed{1.8\times10^{-3}}$ & $\boxed{4.8\times10^{-3}}$ & $1.5\times10^{-2}$ \\
    & (PSM) - Case 2 & $2.4\times10^{-2}$ & $\boxed{1.1\times10^{-2}}$ & $\boxed{3.7\times10^{-1}}$ & $\boxed{3.7\times10^{-1}}$ & $\boxed{7.9\times10^{-1}}$ & $1.5\times10^0$ \\
    \hline
    $10^5$ & (PHO) - Case 1 & $\boxed{7.7\times10^{-5}}$ & $6.3\times10^{-3}$ & $\boxed{6.7\times10^{-5}}$ & $9.5\times10^{-4}$ & $\boxed{6.5\times10^{-5}}$ & $2.1\times10^{-3}$ \\
    & (PHO) - Case 2 & $\boxed{7.8\times10^{-4}}$ & $7.5\times10^{-3}$ & $\boxed{6.4\times10^{-5}}$ & $1.4\times10^{-3}$ & $\boxed{5.7\times10^{-3}}$ & $7.3\times10^{-3}$ \\
    & (PSM) - Case 1 & $\boxed{2.2\times10^{-4}}$ & $1.1\times10^{-2}$ & $\boxed{2.0\times10^{-4}}$ & $6.8\times10^{-4}$ & $\boxed{4.3\times10^{-4}}$ & $4.0\times10^{-3}$ \\
    & (PSM) - Case 2 & $2.6\times10^{-2}$ & $\boxed{6.0\times10^{-3}}$ & $\boxed{3.7\times10^{-1}}$ & $\boxed{3.7\times10^{-1}}$ & $8.0\times10^{-1}$ & $\boxed{7.9\times10^{-1}}$ \\
    \hline
    \end{tabular}
    \caption{\sf\small Errors in controls $u$, states $x$ and costates $\lambda$ and CPU times for the DR algorithm and AMPL--Ipopt,\\ with $\varepsilon = 10^{-8}$ and specifications from Table \ref{tbl:probs}.}
    \label{tbl:exp1}
\end{table}
\end{landscape}

\begin{landscape}
\begin{table}[t]
\centering
\small
\begin{tabular}{ScScllllllll}
 & & \multicolumn{2}{|c|}{$L^\infty$ error in $\mu$} & \multicolumn{2}{c|}{$L^\infty$ error in objective values} & \multicolumn{2}{|c}{CPU time [sec]} \\ \cline{3-4}  \cline{5-6}  \cline{7-8}
    $N$ & Problem & \multicolumn{1}{|c}{DR} & \multicolumn{1}{c|}{Ipopt} & \multicolumn{1}{c}{DR} & \multicolumn{1}{c|}{Ipopt} & \multicolumn{1}{|c}{DR} & \multicolumn{1}{c|}{Ipopt} \\ \hline\hline
    $10^3$ & (PHO) - Case 1 & $\boxed{4.9\times10^{-3}}$ & $3.1\times10^{-2}$ & $\boxed{2.9\times10^{-3}}$ & $4.8\times10^{-3}$ & $\boxed{4.8\times10^{-2}}$ & $2.9\times10^{-1}$ \\
    & (PHO) - Case 2 & $\boxed{5.4\times10^{-1}}$ & $1.5\times10^{0}$ & $\boxed{2.9\times10^{-3}}$ & $4.9\times10^{-3}$ & $\boxed{3.3\times10^{-1}}$ & $3.7\times10^{-1}$ \\
    & (PSM) - Case 1 & $\boxed{2.4\times10^{-3}}$ & $8.6\times10^{-2}$ & $\boxed{4.8\times10^{-2}}$ & $5.1\times10^{-2}$ & $\boxed{1.0\times10^{-1}}$ & $4.4\times10^{-1}$ \\
    & (PSM) - Case 2 & $1.6\times10^{1}$ & $\boxed{3.1\times10^{0}}$ & $\boxed{6.8\times10^{-2}}$ & $7.4\times10^{-2}$ & $\boxed{5.2\times10^{-1}}$ & $5.3\times10^{-1}$ \\
    \hline
    $10^4$ & (PHO) - Case 1 & $\boxed{1.5\times10^{-4}}$ & $3.0\times10^{-3}$ & $\boxed{2.8\times10^{-4}}$ & $4.8\times10^{-4}$ & $\boxed{4.7\times10^{-1}}$ & $2.7\times10^{0}$ \\
    & (PHO) - Case 2 & $\boxed{1.7\times10^{1}}$ & $\boxed{1.7\times10^{1}}$ & $\boxed{2.8\times10^{-4}}$ & $4.9\times10^{-4}$ & $3.5\times10^{0}$ & $\boxed{3.3\times10^{0}}$ \\
    & (PSM) - Case 1 & $\boxed{7.1\times10^{-4}}$ & $8.2\times10^{-3}$ & $\boxed{4.6\times10^{-3}}$ & $5.0\times10^{-3}$ & $\boxed{1.1\times10^{0}}$ & $4.6\times10^{0}$ \\
    & (PSM) - Case 2 & $1.4\times10^{1}$ & $\boxed{2.5\times10^{0}}$ & $\boxed{4.4\times10^{-3}}$ & $6.9\times10^{-3}$ & $\boxed{5.9\times10^{0}}$ & $6.4\times10^{0}$ \\
    \hline
    $10^5$ & (PHO) - Case 1 & $\boxed{3.7\times10^{-5}}$ & $3.0\times10^{-4}$ & $\boxed{2.8\times10^{-5}}$ & $7.7\times10^{-5}$ & $\boxed{5.1\times10^{0}}$ & $2.6\times10^{1}$ \\
    & (PHO) - Case 2 & $\boxed{1.8\times10^{1}}$ & $\boxed{1.8\times10^{1}}$ & $\boxed{2.4\times10^{-5}}$ & $1.4\times10^{-4}$ & $3.7\times10^{1}$ & $\boxed{3.1\times10^{1}}$ \\
    & (PSM) - Case 1 & $\boxed{1.1\times10^{-4}}$ & $8.2\times10^{-4}$ & $\boxed{4.5\times10^{-4}}$ & $5.3\times10^{-4}$ & $\boxed{1.2\times10^{1}}$ & $1.2\times10^{2}$ \\
    & (PSM) - Case 2 & $\boxed{2.2\times10^{2}}$ & $3.7\times10^{3}$ & $1.5\times10^{-3}$ & $\boxed{7.2\times10^{-4}}$ & $\boxed{6.6\times10^{1}}$ & $1.6\times10^{2}$ \\
    \hline
    \end{tabular}
    \caption{\sf\small Errors in multipliers $\mu$, errors in objective values and CPU times for the DR algorithm and AMPL--Ipopt,\\ with $\varepsilon = 10^{-8}$ and specifications from Table \ref{tbl:probs}.}
    \label{tbl:exp2}
\end{table}
\end{landscape}

\section{Conclusion}
\label{sec:concl}

We have applied the Douglas--Rachford (DR) algorithm to find a numerical solution of LQ control problems with state and control constraints, after re-formulating these problems as the sum of two convex functions and deriving expressions for the proximal mappings of these functions~(Theorems~\ref{thm:proxf}--\ref{thm:proxg}).  These proximal mappings were used in the DR iterations.  Within the DR algorithm~(Algorithm~\ref{alg:DR}), we proposed a procedure~(Algorithm~\ref{alg:projA}) for finding the projection onto the affine set defined by the ODEs numerically.  

We carried out extensive numerical experiments on two nontrivial example problems and illustrated both the working of the algorithm and its efficiency (in both accuracy and speed) compared with the traditional approach of direct discretization. We observed that in general the DR algorithm produced
smaller errors and faster run times for these problems most notably in the examples where we
have increased the number of discretization points. From these numerical results the DR algorithm could in general
be recommended over Ipopt when generating high quality solutions.

Based on further extensive experiments, we conjectured on how the costate variables can be determined.  We successfully used the costate variables constructed as in the conjecture, as well as the state constraint multipliers calculated using these costate variables, for the numerical verification of the optimality conditions.

We recall that Algorithm~\ref{alg:projA} involves repeated numerical solution of the ODEs in~\eqref{eqn:lin_sys} with various initial conditions.  For solving~\eqref{eqn:lin_sys} we implemented (explicit) Euler's method which requires only a continuous right-hand side of the ODEs.  Algorithm~\ref{alg:projA} appears to be successful for the worked examples partly owing to the fact that in these examples the optimal control is continuous.  We tried to use our approach for the machine tool manipulator problem from \cite{MTM} which has 7 state variables, one control variable, and upper and lower bounds imposed on the single control variable and one of the state variables.  However, our approach did not seem to yield a solution (so far) for this problem, conceivably owing to the fact that the optimal control variable, as well as the optimal costate variable vector, is not continuous, in that these variables have jumps a number of times during the process---see~\cite[Figure~5]{MTM}.  Note that discontinuities in the control make the right-hand side of~\eqref{eqn:lin_sys} discontinuous rendering Euler's method ineffective.  Therefore, problems of the kind in~\cite{MTM} require further investigation.

We believe that our approach can be extended to more general convex optimal control problems, for example those with a non-quadratic objective function or mixed state and control constraints, as long as the pertaining proximal operators are not too challenging to derive.  
%Having said this, in the current paper, we derived proximal mappings for the case when $Q$ and $R$ in the quadratic objective functional are diagonal.  Even for the LQ control problems with non-diagonal $Q$ and $R$, the derivation of the proximal mappings seems to be quite more challenging.

It would also be interesting to employ and test, in the future, other projection type methods, for example the Peaceman--Rachford algorithm~\cite[Section~26.4 and Proposition~28.8]{BauCom2017}, which, to the knowledge of the authors, has not been applied to optimal control problems.

\section*{Acknowledgments}

BIC was supported by an Australian Government Research Training Program Scholarship. No funding
was received by RSB and CYK to assist with the preparation of this manuscript.

\section*{Conflict of Interest}

The authors have no competing, or conflict of, interests to declare that are relevant to the content of this article.

\section*{Use of AI Tools Declaration}

The authors declare that they have not used Artificial Intelligence (AI) tools in the creation of this article.

{\small

}
\end{document}